\def\sqr#1#2{{\vcenter{\hrule height.#2pt
        \hbox{\vrule width.#2pt height#1pt \kern#1pt
                \vrule width.#2pt}
        \hrule height.#2pt}}}
\newtheorem{theorem}{Theorem}[section]
\newtheorem{proposition}[theorem]{Proposition}
\newtheorem{corollary}[theorem]{Corollary}
\theoremstyle{definition}
\newtheorem{definition}[theorem]{Definition}
\newtheorem{example}[theorem]{Example}
\newtheorem{question}[theorem]{Question}
\newtheorem{remark}[theorem]{Remark}
\newtheorem{setting}[theorem]{Setting}
\newtheorem{claim}[theorem]{Claim}
\DeclareMathOperator{\n}{\mathbf n}
\DeclareMathOperator{\m}{\mathbf m}
\DeclareMathOperator{\p}{\mathbf p}
\DeclareMathOperator{\q}{\mathbf q}
\DeclareMathOperator{\Spec}{Spec}
\DeclareMathOperator\ord{ord}
\DeclareMathOperator\tr{tr}
\DeclareMathOperator\Min{Min}
\DeclareMathOperator{\hgt}{ht}
\DeclareMathOperator{\Bl}{Bl}
\DeclareMathOperator{\Rees}{Rees }
\def\alert#1{\smallskip{\hskip\parindent\vrule%
\vbox{\advance\hsize-2\parindent\hrule\smallskip\parindent.4\parindent%
\narrower\noindent#1\smallskip\hrule}\vrule\hfill}\smallskip}
\begin{document}

\title[Rees valuations of  complete ideals]
{The Rees valuations of  complete ideals \\ in a regular local ring }



\author{William Heinzer}
\address{Department of Mathematics, Purdue University, West
Lafayette, Indiana 47907 U.S.A.}
\email{heinzer@math.purdue.edu}

\author{Mee-Kyoung Kim}
\address{Department of Mathematics, Sungkyunkwan University, Jangangu Suwon
440-746, Korea}
\email{mkkim@skku.edu}

\date \today

\subjclass{Primary: 13A30, 13C05; Secondary: 13E05, 13H15}
\keywords{ Rees valuation, finitely supported ideal, special $*$-simple 
complete ideal, base points, point basis, transform of an ideal, 
local quadratic transform, projective equivalence, projectively full.
}

\maketitle
\bigskip

\begin{abstract}  Let $I$ be a complete $\m$-primary ideal of a  regular local 
ring  $(R,\m)$ of dimension $d \ge 2$. In the case of dimension two, the beautiful 
theory developed by Zariski implies that $I$ factors uniquely as a product of powers of 
simple complete ideals and  each of the simple complete factors of $I$ has a 
unique Rees valuation. In the higher dimensional case, a simple complete ideal of $R$ often
has more than one Rees valuation, and a complete $\m$-primary ideal $I$ may
have finitely many or infinitely many base points.  For the ideals having finitely many 
base points Lipman proves a unique factorization involving special $*$-simple complete
ideals and possibly negative exponents of the factors. Let $T$ be an infinitely near point to
$R$ with $\dim R = \dim T$ and $R/\m = T/\m_T$. We prove that the special $*$-simple 
complete ideal $P_{RT}$ has a unique Rees valuation if and only if either $\dim R = 2$ or 
there is no change of direction in the unique finite sequence of local quadratic 
transformations from $R$ to $T$. We also examine conditions for a  
complete ideal to be projectively full.  
\end{abstract}

\baselineskip 18 pt

\section{Introduction} \label{c1}

Motivation for our work in this paper comes from an interesting article of   Joseph Lipman  
\cite{L}.  Lipman considers the structure of a certain class of 
complete ideals, the finitely supported complete ideals,  in a regular local ring  (RLR) 
of dimension $d \ge 2$.  He  proves a factorization theorem for the finitely
supported complete ideals that  extends the factorization theory of complete ideals 
in a two-dimensional RLR    as developed 
by Zariski  \cite[Appendix 5]{ZS2}.  Other work on this topic has been done by John Gately
in \cite{G1} and \cite{G2},  and by Campillo, Gonzalez-Sprinberg and Lejeune-Jalabert in \cite{CGL}.

All rings we consider are assumed to be commutative with an identity element.
We use the concept of complete ideals as defined and discussed in Swanson-Huneke 
\cite[Chapters~5,6,14]{SH}.  We also use a number of concepts considered in Lipman's 
paper \cite{L}. The product of two complete ideals in a two-dimensional regular local 
ring is again complete. This no longer holds in higher dimension, \cite{C} or \cite{Hu}.
To consider the higher dimensional case,  one defines for ideals $I$ and $J$ the $*$-product,
$I*J$ to be the completion of $IJ$.  A complete ideal $I$ in a commutative ring $R$ 
is said to be {\bf $*$-simple} if $I \ne R$ and  if 
$I = J*L$ with ideals $J$ and $L$ in $R$  implies that either $J = R$ or $L = R$.

Another concept used by Zariski in \cite{ZS2} is 
that of the transform of an ideal; the complete transform of an
ideal is used in \cite{L} and \cite{G2}.  

\begin{definition} \label{1.1}
Let $R \subseteq T$ be unique factorization domains (UFDs)  with  $R$
and $T$ having the same field of fractions, and let  $I$ be an ideal of $R$ not contained in 
any proper principal ideal. 
\begin{enumerate}
\item The  {\bf transform } of $I$ in $T$ is the ideal $I^T = a^{-1}IT$,
where $aT$ is the smallest principal ideal in $T$ that contains $IT$.
\item  
The {\bf complete transform} of $I$ in $T$ is the completion $\overline {I^T}$ of $I^T$.
\end{enumerate} 
\end{definition}

A   proper  ideal $I$  in a commutative ring $R$   is {\bf simple} if $I \neq L\cdot H$, for any proper ideals $L$ and $H$.
An element $\alpha \in R$ is said to be {\bf integral over} $I$ 
if $\alpha$ satisfies an equation of the form
$$
\alpha^n + r_1 \alpha^{n-1} + \cdots + r_n = 0, \quad \text{where} \quad r_i \in I^i.
$$
The set of all elements in    $R$  that  are integral over an ideal $I$ forms 
an ideal, denoted by $\overline{I}$ and called the {\bf integral closure} of $I$.
An ideal $I$ is said to be {\bf complete}   (or, {\bf  integrally closed})   if $I = \overline {I}$.

For an ideal $I$ of a local ring $(R,\m)$, the {\bf order} of $I$, denoted $\ord_{R} I$, is $r$ if $I \subseteq \m^r$ but $I \nsubseteq \m^{r+1}$.
 If $(R,\m)$ is a regular
local ring, the function that associates to an element $a \in R$,   
the order of the principal ideal $aR$,   defines  a 
discrete rank-one valuation, denoted $\ord_R$  on the field of fractions of $R$.  The associated valuation 
ring (DVR)  is called {\bf the order valuation ring } 
of $R$.

Let $I$ be a nonzero ideal of a Noetherian integral domain $R$. 
The set  of  {\bf Rees valuation rings } of $I$  is  denoted {\bf Rees}$~I$, or by  {\bf Rees}$_RI$
to  also indicate the ring in which $I$ is an ideal. It 
is by  definition the set of DVRs
$$ 
\Big\{\Big(\overline {R\Big[ \frac{I}{a}\Big]}\Big)_{Q}~~ \vert \quad  0 \neq a \in I  \quad 
\text{and} \quad  Q \in \Spec \Big( \overline {R\Big[ \frac{I}{a}\Big]} \Big)\quad 
\text{is of height one with}~~
 I \subset Q \Big\},
$$
where $\overline{\cdot}$ denotes integral closure in the field of fractions. 
The corresponding  discrete valuations with value group $\mathbb Z$ 
are called the {\bf Rees valuations} of $I$. 
In general, if $J \subseteq I$ is a reduction, then we have $\Rees J = \Rees I$. 

An ideal $I$ is said to be {\bf normal} if all the powers of $I$ are complete.  
Let $I$ be a normal $\m$-primary ideal of a normal Noetherian  local 
domain $(R,\m)$.  The minimal prime ideals of $\m R[It]$
in the Rees algebra $R[It]$   are in one-to-one correspondence with the Rees valuation rings
of $I$.  The correspondence 
associates to each Rees valuation ring $V$ of $I$ a unique prime $P   \in \Min(\m R[It])$ such that $V=R[It]_{P} \cap \mathcal{Q}(R)$.  Properties of the quotient ring $R[It]/P$ relate to properties of certain birational extensions of $R$.

If $(R, \m)$ is a two-dimensional regular local ring,  then the  Zariski theory implies that a simple complete $\m$-primary ideal
has a unique Rees valuation ring. However, if the dimension of $R$ is greater than two, then a simple
complete $\m$-primary ideal  may have more than one Rees valuation ring; indeed, 
this is often the case even  for a special $*$-simple complete ideal as in Definition~\ref{2.13}. 
An ideal $I$ of a 
Noetherian integral domain $R$ is said to {\bf one-fibered} if $I$ has a unique Rees valuation.

In the case where $(R,\m)$ is a two-dimensional regular local ring, 
Zariski's unique factorization theorem implies that  a complete $\m$-primary ideal 
$I$ can be factored uniquely as a finite product of powers of simple complete ideals. 
  The  distinct simple factors of $I$ are in one-to-one 
correspondence  with the Rees valuation rings of $I$.

 If $I$ is a simple
complete ideal of a two-dimensional RLR and $R/\m$ is algebraically closed,
 Huneke and Sally \cite[Theorem~3.8]{HS} prove that $R[It]/P$ is regular.   This result is extended 
in \cite[Theorem~3.1]{K} by proving that if $R/\m$ is algebraically closed, then $I$ is a product 
of distinct simple complete ideals if and only if $R[It]/P$ is regular for each minimal prime $P$ of 
$\m R[It]$.

Let $(R,\m)$ be a regular local ring of dimension $d \ge 2$.  In Section~\ref{c2} we
discuss the structure of regular local rings $T$ birational over $R$, and the order 
valuation ring of $T$. In Section~\ref{s3} we review Lipman's unique factorization theorem
and raise several questions about the base points of finitely supported complete ideals.
Let $I$ be an $\m$-primary ideal of $R$. In Section~\ref{c3} we compare the Rees valuations of
$I$ with the Rees valuations of the transform $I_1$ of $I$ in $S_1 = R[\frac{\m}{x}]$,
where $x \in \m \setminus \m^2$.  We prove in Proposition~\ref{2.3} that 
$\Rees I \subseteq \Rees_{S_1} I_1 \cup \Rees \m$.  If $I$ is finitely supported, we
prove in Proposition~\ref{2.5} that $\Rees_{S_1} I_1 \subseteq \Rees I$, and demonstrate
in Example~\ref{2.61} that this may fail if $I$ is not finitely supported.

We observe in Remark~\ref{4.311} that every special $*$-simple complete ideal is
projectively full. In Proposition~\ref{4.7} we prove that a complete $\m$-primary 
ideal of $R$ is projectively full if 
the transform $I_1$ of
$I$ in $S_1$ is projectively full.   In Section~\ref{c4} we examine the structure of 
special $*$-simple complete ideals in terms of their Rees valuations.  
Let $T$ be an infinitely near point to
$R$ with $\dim R = \dim T$ and $R/\m = T/\m_T$. We prove in Theorem~\ref{4.40} 
that the special $*$-simple 
complete ideal $P_{RT}$ has a unique Rees valuation if and only if either $\dim R = 2$ or 
there is no change of direction in the unique finite sequence of local quadratic 
transformations from $R$ to $T$.    In the case where $T = R_1$ is a first local 
quadratic transform of $R$ and $R/\m \ne T/\m_T$, we demonstrate in 
Examples~\ref{4.121} and \ref{4.122} that sometimes the special $*$-simple complete 
ideal $P_{RT}$ has two Rees valuations and sometimes only one Rees valuation.
Examples~\ref{4.3} and \ref{4.4} illustrate a pattern where from $R_0$ to $R_2$ or
from $R_0$ to $R_3$ there is exactly one or exactly two changes of direction.

\section{Preliminaries } \label{c2}

Let $V$ be a valuation domain and let  $R$ be a subring of $V$. 
Let $\m(V)$ denote the unique maximal ideal of $V$.
We call the prime ideal $\m(V)\cap R$ of $R$ {\bf the center} of $V$ on $R$.

Let $(R,\m)$ be a Noetherian local domain with field of fractions $\mathcal Q(R)$.
A valuation domain  $(V, \m(V))$ is said to {\bf birationally dominate}  $R$ 
if $R \subseteq V \subseteq \mathcal Q(R)$ and  $\m(V) \cap R = \m$, that is, $\m$ is the 
center of $V$ on $R$.
The valuation domain  $V$ is said
to be a {\bf prime divisor} of $R$ if  $V$ birationally dominates $R$ 
and the transcendence degree of the field $V/\m(V)$ 
over $R/\m$ is $\dim R - 1$. If $V$ is a prime divisor of $R$, then $V$ is a DVR
\cite[p. 330]{A}.

The {\bf quadratic dilatation}  or {\bf blowup}  of $\m$ along $V$,  
cf.  \cite[ page~141]{N},  is the
unique local ring on the blowup $\Bl_{\m}(R)$  of $\m$ that is dominated by  $V$.  
The ideal $\m V$ is 
principal and is generated by an element of $\m$.  Let $a \in \m$ be such that $aV = \m V$.  Then
$R[\m/a] \subset V$. Let $Q := \m(V) \cap R[\m/a]$. Then $R[\m/a]_Q$ is the  { \bf  quadratic 
transformation   of }
$R$ {\bf along }  $V$.  In the special case where $(R,\m)$ is a $d$-dimensional regular local domain 
we use the following terminology.

\begin{definition}\label{2.1}
Let $d$ be a positive integer and let $(R, \m, k)$ be a $d$-dimensional regular local ring 
with maximal ideal $\m$ and  residue field $k$. 
Let $x\in \m \setminus \m^2$ and let $S_1 := R[\frac{\m}{x}]$. The ring  $S_1$ is a
$d$-dimensional regular ring in the sense that each localization of $S_1$ at a 
prime ideal is a regular local ring.  To see this, observe that  $S_1/xS_1$ is isomorphic to a 
polynomial ring in $d-1$ variables over the field $k$, cf. \cite[Corollary~5.5.9]{SH},
 and $S_1[1/x] = R[1/x]$ is a regular ring. Moreover,
$S_1$ is a UFD  since $x$ is a prime element of $S_1$ and
$S_1[1/x] = R[1/x]$ is a UFD, cf. \cite[Theorem~20.2]{M}.
Let $I$ is an $\m$-primary ideal of $R$ with $r:=\ord_{R}(I)$. Then one has in $S_1$
$$
IS_1=x^rI_1 \quad \text{for some ideal}\quad I_1 \quad \text{of}\quad S_1.
$$
We observe in Remark~\ref{2.11} that either $I_1 = S_1$ or $\hgt I_1 \ge 2$. 
Thus  $I_1$ is  the transform $I^{S_1}$  of $I$ in $S_1$ as
in Definiton~\ref{1.1}. 

Let $\p$ be a prime ideal of $R[\frac{\m}{x}]$ with $\m \subseteq \p$. 
      The local ring 
$$R_1:~= ~R[\frac{\m}{x}]_{\p}  ~ = (S_1)_{\p}
$$
 is called a {\bf local quadratic transform} of $R$;  the ideal
$I_1R_1$ is the  transform of $I$ in $R_1$ as in Definition~\ref{1.1}.  
\end{definition}

\begin{remark} \label{2.11} 
With the notation of Definition~\ref{2.1},  to justify that 
the ideal  $I_1$ is the transform of $I$ in $S_1$, we observe that the ideal $I_1$ is 
not contained in any height-one prime of $S_1$. For if $I_1  \subseteq xS_1$, then 
we would have $I \subseteq x^{r+1}S_1 \cap R = \m^{r+1}$, a contradiction to the
choice of $r$. If  $I_1 \subseteq q$, where $q$ is a height-one prime of $S_1$ 
different from $xS_1$, then  $I \subseteq q \cap R$. This is impossible 
since  $q \cap R$ is a height-one prime of $R$ and  $I$ is $\m$-primary.
\end{remark}

We follow the notation of \cite{L} and  refer to regular local rings of dimension at
least two 
as   {\bf points}.
A point $T$ is said to be {\bf infinitely near} to a point $R$,  in symbols, $R~ \prec~ T$, 
if there is a finite sequence of local quadratic transformations 
\begin{equation} \label{e1} 
R=:R_0 ~\subset~ R_1 ~\subset~ R_2~ \subset \cdots \subset~ R_n=T \quad (n \geq 0),
\end{equation}
where $R_{i+1}$ is a local quadratic transform of $R_i$ for $i=0,1, \ldots, n-1$.
If such a sequence of local quadratic transforms as in Equation~\ref{e1} exists, then it is unique and 
it is
called the {\bf quadratic sequence} from $R$ to $T$ \cite[Definition~1.6]{L}.

\begin{remark} \label{2.111} Let $(R,\m)$ be a regular local ring with $\dim R \ge 2$. 
As noted in \cite[Proposition~ 1.7]{L}, there is a one-to-one correspondence between the 
points $T$ infinitely near to $R$ and the prime divisors $V$ of $R$. This correspondence
is defined by associating with $T$ the order valuation ring $V$ of $T$.   Since $V$ is the unique  local quadratic transform of $T$
of dimension one,  the local quadratic sequence in Equation~\ref{e1}  extends to give 
Equation~\ref{e11}:
\begin{equation} \label{e11} 
R=:R_0 ~\subset~ R_1 ~\subset~ R_2~ \subset \cdots \subset~ R_n=T   ~\subset ~V.
\end{equation}
The one-to-one correspondence between the 
points $T$ infinitely near to $R$ and the prime divisors $V$ of $R$ implies that
$T$ is the unique point  infinitely near to $R$ for which the order valuation ring of $T$ is $V$.
However, if  $\dim R > 2$, then there  often exist 
regular local rings $S$ with $S \ne T$ such that $S$ birationally dominates $R$ 
and the order valuation ring of $S$ is $V$. We illustrate this in Example~\ref{2.112}.
\end{remark} 

\begin{example} \label{2.112} 
Let $(R,\m)$ be a 3-dimensional RLR with $\m = (x,y,z)R$, and let $V$ denote the
order valuation ring of $R$. Let  $S=R[\frac{y}{x}]_{(x, z)}$. 
Then $S$ is a $2$-dimensional RLR that birationally dominates $R$,  
and $V$ is the order valuation ring of $S$. Notice that $S$ is not infinitely near to $R$. 
\end{example} 

\begin{remark} \label{2.113}
Let $(R,\m)$ be a $d$-dimensional RLR with $d \ge 2$ and let $V$ be the order
valuation ring of $R$. Let $(S,\n)$ be a $d$-dimensional RLR that is a birational
extension of $R$. Then
\begin{enumerate}
\item $S$ dominates $R$.
\item If $V$ dominates $S$, then $R = S$.
\item  Thus $R$ is the unique $d$-dimensional RLR having order valuation ring $V$ among
the regular local rings birational over $R$.
\end{enumerate}
\end{remark}

\begin{proof}  For item (1), let $P := \n \cap R$. Then $R_P \subseteq S$. If $P \ne \m$, 
then $\dim R_P = n < d$. Since every  birational extension of an $n$-dimensional Noetherian
domain has dimension at most $n$, we must have $\dim S \le n$, a contradiction. Thus $S$
dominates $R$. Item (2) follows from \cite[Corollary~2.6]{Sa2}.  In more detail, if $V$ dominates
$S$, then $R/\m = S/\n$ and the elements in a minimal generating set for $\m$ are part of 
a minimal generating set for $\n$. Hence we have $\m S = \n$. By Zariski's Main Theorem as in
\cite[(37.4)]{N}, it follows that $R = S$. Item (3) follows from item (2).
\end{proof}

Example~\ref{2.114} demonstrates the existence of a prime divisor $V$ 
for a $3$-dimensional RLR  $(R, \m, k)$ for which there 
exist infinitely many distinct $3$-dimensional RLRs that 
birationally dominate $R$, and  have $V$ as their  order valuation ring.

\begin{example}\label{2.114}
Let $(R, \m, k)$ be a $3$-dimensional regular local ring with residue field $R/\m = k$
and maximal ideal $\m = (x,y,z)R$.   Let $V$ be the  prime divisor of $R$ 
corresponding to the valuation $v$,
where $v(x)= v(y)=1$ and $v(z)=3$, and where the images of 
$\frac{x^3}{z}$ and $\frac{y^3}{z}$ in the residue field  $k_v$ of $V$ are 
algebraically independent over $R/\m = k$.
Then we have :
\begin{enumerate}
\item 
In the unique finite sequence of local quadratic transformations given by 
\cite[Proposition~1.7]{L}, we have:
$$
R=:R_0~\subset~ R_1 ~\subset~ R_2~\subset ~V,
$$
 where
$$
\aligned
&R_1:=R[\frac{\m}{x}]_{\p},\quad \text{ where}\quad \p:=\m(V) \cap R[\frac{\m}{x}]=(x, \frac{z}{x})R[\frac{\m}{x}],\\
&R_2:=R_1[\frac{\m_1}{x}]_{\q},\quad \text{ where}\quad \q:=\m(V) \cap R_1[\frac{\m_1}{x}]=(x, \frac{z}{x^2})R_1[\frac{\m_1}{x}],
\endaligned
$$
and  $V$ is the order valuation ring of $R_2$.
Notice that $(R_1, \m_1)$ and $(R_2, \m_2)$ are $2$-dimensional RLRs.

\item
For each integer $n \ge 1$,  let $T_n:=R[\frac{z}{x^2+y^{2n+1}}]_{(x,y,\frac{z}{x^2+y^{2n+1}})}$. 
Then  we have: 
\begin{enumerate}
\item By \cite[Lemma~4.2]{Sa1}, each 
 $T_n$ is a $3$-dimensional RLR that birationally dominates $R$. 
\item  For each $n$, the images of $\frac{y}{x}$ and $\frac{z}{x^3}$ in $k_v$
are algebraically independent over $k$. Hence 
the order valuation ring of $T_n$ is $V$.  
\item 
By \cite[Corollary~4.5]{Sa1}), the elements in the family $\{T_n\}_{n=1}^\infty$ are 
distinct. 
\end{enumerate}
\end{enumerate}
\end{example}

\begin{definition} \label{2.12} 
A {\bf base point} of a nonzero ideal $I \subset R$ is a point $T$  infinitely near 
 to  $R$ such 
that $I^{T} \neq T$. The set of base points of $I$ is denoted by
$$
\mathcal{BP}(I)=\{~ T~\vert  ~T \text{ is a point such that}~~R \prec T ~\text{and}~\ord_T (I^T)\neq 0~\}.
$$
The {\bf point basis} of a nonzero ideal $I \subset R$ is the family of nonnegative integers
$$
\mathcal B(I)=\{~\ord_T (I^T) ~\vert~ R~ \prec~ T ~\}.
$$
The nonzero ideal  $I$ is said to be {\bf finitely supported} if  
$I$ has only  finitely many base points.
\end{definition}

\begin{definition} \label{2.13}
Let $R \prec T$ be points such that $\dim R=\dim T$. Lipman proves in \cite[Proposition~2.1]{L}
the existence of a unique complete ideal $P_{RT}$ in $R$ such that for every point $A$
with $R \prec A$,  the 
complete transform 
$$
\overline{(P_{RT})^A} ~ \text{ is }  ~
\begin{cases}
\text{ a $*$-simple ideal if  } ~ A \prec T, \\ \text{ the ring } A \text{ otherwise.} 
\end{cases}  
$$
The ideal $P_{RT}$ of $R$ is said to be  a {\bf special $*$-simple complete ideal}. 

In the case where $R \prec T$ and  $\dim R = \dim T$, we say that 
the order valuation ring of $T$
is a {\bf special prime divisor} of $R$.
\end{definition} 

\begin{remark} \label{2.131}
With notation at in Definition~\ref{2.13},  a prime divisor $V$ of $R$ is 
special if and only if the unique point $T$ with $R \prec T$ such that the
order valuation ring of $T$ is $V$ has $\dim T = \dim R$. Let $\dim R = d$. 
If  $V$ is a special prime divisor of $R$, then the residue field of $V$ is 
a pure transcendental extension of degree $d - 1$ of the residue field $T/\m(T)$
of $T$, and $T/\m(T)$ is  
a finite algebraic
extension of $R/\m$.  If the residue field $R/\m$ of $R$ is 
algebraically closed and $V$ is a special prime divisor of $R$, then the 
residue field of $V$ is a pure transcendental extension of $R/\m$ of transcendence
degree $d - 1$.  

It would be interesting to  identify   and describe in other ways   
the special prime divisors of $R$ 
among the set of all prime divisors of $R$.
\end{remark}

\section{Factorization as products of special $*$-simple complete ideals} \label{s3}

Let $R = \alpha$ be a $d$-dimensional regular local ring with $d \ge 2$.
 Lipman in \cite[Theorem~2.5]{L}
proves that for every finitely supported complete ideal $I$ of $R$ there exists a 
unique family of integers 
$$
(n_\beta) ~ = ~ (n_\beta(I))_{\beta \succ \alpha, ~ \dim \beta = \dim \alpha}
$$
such that $n_\beta = 0$ for all but finitely many $\beta$  and such that 
\begin{equation} \label{es30}
\Big( \prod_{n_\delta < 0} P_{\alpha\delta}^{-n_\delta} \Big)*I ~=~ 
\prod_{n_\gamma > 0}P_{\alpha\gamma}^{n_\gamma}
\end{equation}
where $P_{\alpha\beta}$ is the special $*$-simple ideal associated with $\alpha \prec \beta$
and the products are $*$-products.  The product 
on the left in Equation~\ref{es30}  is  over all $\delta \succ \alpha$ 
such that $n_\delta < 0$ and the product on 
the right is over all $\gamma \succ \alpha$ such that $n_\gamma > 0$.

Lipman gives the following example to illustrate this decomposition.

\begin{example} \label{s3.1} Let $k$ be a field and let $\alpha = R =  k[[x,y,z]]$ be the
formal power series ring in the 3 variables $x,y,z$ over $k$.  Let 
$$
\beta_x = R[\frac{y}{x}, \frac{z}{x}]_{(x, y/x, z/x)},  \quad 
\beta_y = R[\frac{x}{y}, \frac{z}{y}]_{(y, x/y, z/y)},   \quad 
\beta_z = R[\frac{x}{z}, \frac{y}{z}]_{(z, x/z, y/z)} 
$$ 
be the local quadratic transformations of $R$ in the $x,y,z$ directions. The associated 
special $*$-simple ideals are
$$
P_{\alpha \beta_x} = (x^2, y, z)R, ~ \quad P_{\alpha \beta_y} = (x,y^2, z)R, ~ \quad
P_{\alpha \beta_z} = (x,y,z^2)R.
$$
The equation
\begin{equation} \label{es1}
(x,y,z)(x^3, y^3, z^3, xy, xz, yz) ~= ~ P_{\alpha \beta_x} P_{\alpha \beta_y} P_{\alpha \beta_z}
\end{equation}
represents the factorization of the finitely supported ideal $I = (x^3, y^3, z^3, xy, xz, yz)R$
as a product of special $*$-simple ideals. Here $P_{\alpha \alpha} = (x,y,z)R$. The base points
of $I$ are 
$\mathcal {BP}(I) = \{\alpha, \beta_x, \beta_y, \beta_z\}$ and the point basis of $I$ is
$\mathcal B(I) = \{2,1,1,1\}$. Equation~\ref{es1}  represents the following equality of 
point bases
$$
\mathcal B(P_{\alpha\alpha})~ + ~ \mathcal B(I) ~=~ \mathcal B(P_{\alpha \beta_x})
~+ ~ \mathcal B( P_{\alpha \beta_y}) ~+~ \mathcal B( P_{\alpha \beta_z}).
$$
Each of $P_{\alpha \beta_x},  P_{\alpha \beta_y},  P_{\alpha \beta_z}$ has a unique Rees
valuation. Their product has in addition the order valuation of $\alpha$ as a Rees valuation. 
\end{example}

\begin{question}  \label{s3.3}
Let $I$ be a finitely supported ideal of a regular local ring $R$.
\begin{enumerate}
\item If the base points of $I$ are linearly ordered, does it
follow that $I$ is a $*$-product of special $*$-simple complete ideals, i.e., in
the factorization given in Equation~\ref{es30} are all the integers $n_\beta$ nonnegative?  
\item  If $I$ is $*$-simple and if the base points of $I$ are linearly ordered, does 
it follow that $I$ is a special $*$-simple ideal? 
\item If $R \prec T$ with $\dim R = \dim T$ and $R \ne T$, 
can it happen that some power of the special $*$-simple
complete ideal 
$P_{RT}$  has the maximal ideal $\m$ of $R$ as a factor, that is, 
can there exist an ideal $Q$ of $R$
such that $\m Q = (P_{RT})^n$ for some positive integer $n$? 
\footnote{In a joint paper with Matthew Toeniskoetter titled ``Finitely supported
$*$-simple complete ideals in a regular local ring'', we have answered these 
three questions in
the case where $I$ is a finitely supported monomial ideal.}
\end{enumerate}
\end{question}

\section{Rees valuations of ideals of a regular local ring }\label{c3}

We use the following setting. 

\begin{setting}\label{2.2}
Let $d$ be a positive integer and let $(R, \m, k)$ be a $d$-dimensional regular local ring 
with maximal ideal $\m$ and  infinite   residue field $k$. 
Let $I$ be an $\m$-primary ideal. For each $V \in \Rees I$, 
let $v$ denote the corresponding 
Rees valuation with value group $\mathbb Z$.  
 Let $x\in \m \setminus \m^2$ be such that $xV = \m V$ for each $V \in \Rees I$. Since
the field $k$ is infinite and the set $\Rees I$ is finite, it is 
possible to choose such an element $x$. Let  
$r = \ord_R I$. As in Definition~\ref{2.1}, we have $IS_1 = x^rI_1$, where 
$I_1$ is  the transform of $I$ in $S_1 = R[\frac{\m}{x}]$. 
\end{setting}

\begin{remark} \label{2.25}
With the notation of Setting~\ref{2.2}, we have:
\begin{enumerate}
\item If $J \subseteq I$ 
is a reduction of $I$ in $R$, then $\ord_R J = \ord_R I = r$, and 
$JS_1  = x^rJ_1$ is a reduction of $IS_1 = x^rI_1$ in $S_1$. It follows
that  $J_1$  is the transform in $S_1$ of $J$ and $J_1 \subseteq I_1$ is a 
reduction of $I_1$ in $S_1$. 
\item If $J = (a_1, \ldots, a_d)R$ is a reduction of $I$ 
then a DVR $V$ that birationally
dominates $R$ is a Rees valuation ring of $I$ if and only if the images of 
$\frac{a_2}{a_1}, \ldots, \frac{a_d}{a_1}$ in the field $\frac{V}{\m(V)}$ are algebraically
independent over $k$. 
\item  The unique Rees valuation ring of $\m$ is 
$(S_1)_{xS_1}$,   i.e.,   
 $\Rees \m = \{ (S_1)_{xS_1} \}$.  

\end{enumerate} 
\end{remark}

\begin{proposition} \label{2.3} 
With the notation of Setting~\ref{2.2}, we have:
\begin{enumerate} 
\item
If $I_1 = S_1$, then $v = \ord_R$ and $\ord_R$ is the unique Rees valuation of $I$.
\item 
If $I_1 \ne S_1$ and $v \ne \ord_R$, then $V \in \Rees_{S_1} I_1$. 
\item
In general, we have $\Rees I \subseteq \Rees_{S_1} I_1 \cup \Rees \m$. 
\end{enumerate} 
\end{proposition}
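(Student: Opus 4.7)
My plan is to reduce all three parts to two auxiliary facts. The first is a \emph{localization principle}: if $V \in \Rees_R I$ and $R \subseteq S \subseteq V$ is a birational intermediate ring, then $V \in \Rees_S(IS)$, and for a prime $P$ of $S$ disjoint from $\m(V)$, the reverse inclusion $\Rees_{S_P}(IS_P) \subseteq \Rees_S(IS)$ also holds. The second is the standard product inclusion $\Rees(IJ) \subseteq \Rees I \cup \Rees J$ for ideals in a Noetherian domain \cite{SH}. To verify the forward direction of the localization principle, I would write $V = (\overline{R[I/a]})_Q$ with $aV = IV$, note $R[I/a] \subseteq S[I/a] \subseteq V$, and check that both localizations at $\m(V) \cap$ each ring coincide with $V$ (any denominator outside $Q$ is a unit in $V$, hence outside the larger center). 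The reverse direction uses that integral closure commutes with localization at a multiplicative set.

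For part (1): Since $V \in \Rees I$ and $I$ is $\m$-primary, $V$ birationally dominates $R$; combined with $xV = \m V$, this gives $S_1 \subseteq V$. Setting $q = \m(V) \cap S_1$, the localization principle yields $V \in \Rees_{(S_1)_q}(I(S_1)_q)$. But $I(S_1)_q = x^r I_1(S_1)_q = x^r(S_1)_q$ is principal, since $I_1 = S_1$. The unique Rees valuation of this principal ideal in the regular (hence normal) local domain $(S_1)_q$ is the localization at the unique height-one prime $x(S_1)_q$, which equals $(S_1)_{xS_1} = \ord_R$ by Remark~\ref{2.25}(3). Thus $v = \ord_R$ and $\Rees I = \{\ord_R\}$.

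For part (2): Assume $I_1 \ne S_1$ and $v \ne \ord_R$. As before $V \supseteq (S_1)_q$, and $v \ne \ord_R$ forces $q \supsetneq xS_1$. The localization principle gives $V \in \Rees_{(S_1)_q}(x^r I_1(S_1)_q)$. If $I_1 \not\subseteq q$ then $I_1(S_1)_q = (S_1)_q$ and this ideal reduces to $x^r(S_1)_q$, which would force $V = \ord_R$ by the part~(1) argument, a contradiction. Hence $I_1 \subseteq q$, and the product inclusion gives
\[
\Rees_{(S_1)_q}(x^r I_1(S_1)_q) \subseteq \{\ord_R\} \cup \Rees_{(S_1)_q}(I_1(S_1)_q).
\]
Since $V \ne \ord_R$, we obtain $V \in \Rees_{(S_1)_q}(I_1(S_1)_q)$, and the reverse direction of the localization principle places $V$ in $\Rees_{S_1} I_1$.

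Part (3) then follows immediately: every $V \in \Rees I$ either satisfies $v = \ord_R$ (so $V \in \Rees \m$), or $v \ne \ord_R$, in which case $I_1 \ne S_1$ by part~(1) and so $V \in \Rees_{S_1} I_1$ by part~(2). The main technical hurdle will be cleanly establishing both directions of the localization principle and invoking the product inclusion for Rees valuations; these are well-known but require careful tracking of integral closures and heights of the relevant primes.
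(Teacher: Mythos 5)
Your overall strategy is sound and genuinely different from the paper's. The paper fixes a reduction $J=(a_1,\dots,a_d)R$ of $I$, uses the criterion of Remark~\ref{2.25}(2) that $V\in\Rees J$ exactly when the residues of $a_2/a_1,\dots,a_d/a_1$ are algebraically independent over $k$, and applies the Dimension Formula to compute the height of the center $\p=\m(V)\cap S_1$ directly; item (2) then follows because the elements $f_i=a_i/x^r$ generate a reduction of $I_1$ contained in $\p$. You instead localize Rees valuations at the center and analyze the product $x^r\cdot I_1$, avoiding reductions and the Dimension Formula entirely. Both directions of your localization principle are correct, and your justifications for them are adequate.

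There is, however, one genuine gap: the ``standard product inclusion'' $\Rees(IJ)\subseteq\Rees I\cup\Rees J$ is \emph{false} for arbitrary ideals of a Noetherian domain, so it cannot be cited. For instance, in a three-dimensional regular local ring with $\m=(x,y,z)R$, take $I=(x,y)R$ and $J=(x,z)R$; then $\Rees I=\{v_{(x,y)}\}$ and $\Rees J=\{v_{(x,z)}\}$ are the order valuations along the two height-two primes, and the element $x$ satisfies $v_{(x,y)}(x)=1=v_{(x,y)}(IJ)$ and $v_{(x,z)}(x)=1=v_{(x,z)}(IJ)$, yet $x\notin\overline{IJ}$ because $\ord_R(x)=1<2=\ord_R(IJ)$. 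So these two valuations do not cut out $\overline{IJ}$, and no subset of $\Rees I\cup\Rees J$ can serve as $\Rees(IJ)$. Fortunately, the instance you actually need is true and easy to prove: since $x^r(S_1)_q$ is principal and $(S_1)_q$ is a normal domain, $\overline{(x^rI_1)^n}=x^{rn}\,\overline{I_1^n}$, and membership of $g$ in this ideal is equivalent to the conditions $v_{xS_1}(g)\ge rn=n\cdot v_{xS_1}(x^rI_1)$ (here $v_{xS_1}(I_1)=0$ because $I_1\not\subseteq xS_1$ by Remark~\ref{2.11}, and the conditions at the remaining height-one primes are vacuous) together with $v(g)\ge n\cdot v(x^rI_1)$ for all $v\in\Rees_{(S_1)_q}I_1(S_1)_q$. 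By the minimality characterizing the set of Rees valuations, this yields $\Rees\big(x^rI_1(S_1)_q\big)\subseteq\{(S_1)_{xS_1}\}\cup\Rees_{(S_1)_q}\big(I_1(S_1)_q\big)$, which is exactly the inclusion your argument uses. With that local replacement of the false general citation, your proof goes through.
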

\begin{proof}

For the proof of item $(1)$, let $\p:=\m(V) \cap S_1$ be the center of $V$ on $S_1$. Since
$xS_1 = \m S_1  \subseteq \p$ and $(S_1)_{xS_1}$ is the valuation ring of $\ord_R$, it 
suffices  to show that $\hgt \p=1$. 
By the Dimension Formula (\cite[page~119]{M}), we have 
$$
\hgt \p ~ = ~ 1 ~ \iff  ~  \tr.\deg_k \kappa \big(S_1/{\p} \big)~= ~d-1,
$$ 
where $\kappa(S_1/\p)$ denotes the field of fractions of $S_1/\p$. 
Let $J:=(a_1, \ldots, a_d)R$ be a reduction of $I$. Since
$V \in \Rees I = \Rees J$, the images of $a_2/a_1, \ldots, a_d/a_1$ in $V/\m(V) =: k_v$
are algebraically independent over $k$.
Since $JS_1$ is a reduction of $IS_1$ and $IS_1=x^r S_1$ is a principal ideal,
we have 
$$
J_1:=(f_1, \ldots, f_d)S_1=S_1, \quad \text{where}\quad f_i:=\frac{a_i}{x^r}\quad\text{for}\quad 1\leq i\leq d.
$$
It follows that  $(f_1, \ldots, f_d)V=V$ and thus $v(f_i)=0$ for  $i=1,\ldots, d$. 
Consider the inclusion maps:
$$
 \frac{S_1}{\p} ~ \hookrightarrow ~ \kappa\big(S_1/{\p}\big)~ \hookrightarrow ~ \frac{V}{\m(V)}.
$$
Since $v(f_i)=0$ and $\p=\m(V) \cap S_1$,  we have $f_i \in S_1 \setminus \p$. Therefore  
the images of $
\frac{f_2}{f_1} = \frac{a_2}{a_1}, \ldots, \frac{f_d}{f_1} = \frac{a_d}{a_1}$ 
in $\frac{V}{\m(V)}$ are in 
the subfield $\kappa\big(S_1/{\p}\big)$ of $\frac{V}{\m(V)}$. Hence  $\tr.\deg_k \kappa \big(S_1/{\p} \big)~= ~d-1$. 

For the proof of item $(2)$, we use the notation of the proof of item $(1)$. Notice that $f_1, 
\ldots, f_d$ all have the same $v$-value. Moreover, since $V \ne (S_1)_{xS_1}$, 
we must have $v(f_i) > 0$; for if $v(f_i) = 0$, the proof 
of item $(1)$ shows that $\hgt \p = 1$ and thus $V = (S_1)_{xS_1}$, a contradiction to
our assumption. Thus $J_1 = (f_1, \ldots, f_d)S_1 \subseteq \p$.  Since $J_1$ is a reduction
of $I_1$ and the images of $
\frac{f_2}{f_1} = \frac{a_2}{a_1}, \ldots, \frac{f_d}{f_1} = \frac{a_d}{a_1}$ in $\frac{V}{\m(V)}$
are algebraically independent over $k$, and thus  we have $V \in \Rees_{S_1} J_1 = \Rees_{S_1} I_1$. 

Item $(3)$ follows from  items (1) and (2).
\end{proof} 
 
\begin{proposition} \label{2.31} Let the notation be as in Setting~\ref{2.2} and
let $V \in \Rees I$.  As in Equation~\ref{e11}, there exists a unique 
finite sequence of local quadratic transforms 
\begin{equation} \label{e231} 
R=:R_0 ~\subset~ R_1 ~\subset~ R_2~ \subset \cdots \subset~ R_n=T   ~\subset ~V,
\end{equation} 
where $V$ is the order valuation ring of $T$. Then the  points $R_0, \ldots, R_n$ are 
all base points of $I$. 
\end{proposition}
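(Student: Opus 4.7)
The plan is to argue directly, fixing $i$ and deriving a contradiction from the assumption that $R_i$ is not a base point of $I$; no induction is needed. The key point is that $V$ dominates every $R_i$ in the quadratic sequence, since $V$ is the order valuation ring of $T = R_n$, $T$ birationally dominates each $R_i$, and domination is transitive.

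By the definition of Rees valuation, there exists $a \in I$ and a height-one prime $Q$ of $B := \overline{R[I/a]}$ with $I \subseteq Q$, $V = B_Q$, and $V(a) = V(I)$. Fix $i$ and suppose, for contradiction, that $R_i$ is not a base point of $I$; then $IR_i$ is a principal ideal, say $IR_i = cR_i$. A short computation with $V$-values gives $a/c \in R_i^{\times}$, whence $IR_i = aR_i$, and therefore $R[I/a] \subseteq R_i$. Normality of the regular local ring $R_i$ yields $B = \overline{R[I/a]} \subseteq R_i$, and since the complement $B \setminus Q$ sits in $R_i \setminus \m_i = R_i^{\times}$ (using $\m(V) \cap R_i = \m_i$ from domination), we obtain $B_Q \subseteq R_i$. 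Combined with $R_i \subseteq V$, this gives $V = B_Q \subseteq R_i \subseteq V$, so $V = R_i$, contradicting the fact that $V$ is a DVR while $R_i$ is a regular local ring of dimension $\geq 2$ (every point in the quadratic sequence has dimension $\geq 2$).

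The main step, and the subtlest part, is the observation that the principal assumption $IR_i = cR_i$, coupled with the specific $a$ coming from the Rees-valuation presentation of $V$, makes $R[I/a]$ already lie inside $R_i$; after this, normality of $R_i$ and dominance by $V$ propagate the inclusion through $B$ to $B_Q$, collapsing the one-dimensional valuation ring $V$ into the $\geq 2$-dimensional ring $R_i$. No appeal to Proposition~\ref{2.3}, or to any extension of it to non-$\m$-primary ideals, is needed; the argument uses only the definition of Rees valuation, normality of regular local rings, and the dimension convention for points in a quadratic sequence.
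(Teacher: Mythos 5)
Your proof is correct, and it takes a genuinely different route from the paper's. The paper argues by induction on the length $n$ of the quadratic sequence: it uses Proposition~\ref{2.3} to conclude that $V \in \Rees_{S_1} I_1$ once $V$ is not the order valuation ring of $R$, notes that $R_1$ is a localization of $S_1$ with $I_1R_1$ proper, and then repeats the argument for the transform $I_1R_1$ in $R_1$. You instead fix $i$ and work directly from the presentation $V = \big(\overline{R[I/a]}\big)_Q$: assuming $IR_i = cR_i$ is principal, the computation $v(a) = v(IV) = v(c)$ gives $a/c \in R_i^{\times}$, hence $IR_i = aR_i$ and $R[I/a] \subseteq R_i$; normality of $R_i$ then puts $B = \overline{R[I/a]}$ inside $R_i$, and since $Q = \m(V) \cap B$ while $\m(V) \cap R_i = \m_i$, every element of $B \setminus Q$ is a unit of $R_i$, so $V = B_Q \subseteq R_i \subseteq V$, which is absurd because $\dim V = 1$ while every point in the quadratic sequence has dimension at least $2$. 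All the supporting facts check out: $V$ dominates each $R_i$ by transitivity, $IB = aB$ gives $v(a) = v(I)$, and "$R_i$ is not a base point" is indeed equivalent to $IR_i$ being principal since $R_i$ is a UFD. What your approach buys is independence from Proposition~\ref{2.3} and from the inductive bookkeeping of transforms, and it actually proves a slightly stronger statement: no normal local domain of dimension at least $2$ lying between $R$ and $V$ and dominated by $V$ can extend $I$ to a principal ideal. What the paper's approach buys is uniformity with the transform machinery that recurs later (the same induction pattern reappears in Proposition~\ref{4.0}).
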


\begin{proof}  If $V$ is the order valuation ring of $R$, then $n = 0$ in Equation~\ref{e231}
and $R_0$ is a base point of $I$. If $n > 0$, consider $S_1 = R[\frac{\m}{x}]$ as in 
Setting~\ref{2.2}. By Proposition~\ref{2.3}, $V \in \Rees_{S_1}I_1$. The local quadratic 
transform $R_1$ of $R$ is a localization of $S_1$ and $I_1R_1$ is a proper ideal in $R_1$. 
By an inductive argument on the length $n$ of the sequence in Equation~\ref{e231}, we
conclude that the points $R_1, \ldots, R_n$ are base points of $I_1R_1$ and therefore
also base points of $I$. 
\end{proof}

\begin{remark}\label{2.4}
Let the notation be as in  Setting~\ref{2.2}. 
If $I$ is  a finitely supported ideal in $R$, then  \cite[Corollary~1.22]{L} implies
that $\hgt I_1 = d$ and $\dim(S_1/I_1) = 0$. 
\end{remark}

\begin{proposition}\label{2.5}
Let the notation be as in  Setting~\ref{2.2}. 
If $I$ is  a finitely supported ideal in $R$, then 
$$ \Rees_{S_1} I_1 \subseteq \Rees I.$$
\end{proposition}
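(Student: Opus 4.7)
The plan is to analyze the center of a Rees valuation $W \in \Rees_{S_1} I_1$ on $S_1$ and reduce the problem to the characterization of Rees valuations in Remark~\ref{2.25}(2) applied to a suitable local quadratic transform of $R$.

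First, I would let $W \in \Rees_{S_1} I_1$ and set $\p = \m(W) \cap S_1$, the center of $W$ on $S_1$. By the definition of Rees valuations, $I_1 \subseteq \p$. Since $IS_1 = x^r I_1 \subseteq \p$, we have $I \subseteq \p \cap R$, and since $I$ is $\m$-primary, $\m = \sqrt{I} \subseteq \p \cap R$, so $\m \subseteq \p$. By the finite-support hypothesis and Remark~\ref{2.4}, $\dim(S_1/I_1) = 0$, so $\p$ is a maximal ideal of $S_1$; since $\p$ contains $\m$ and $S_1/xS_1$ is a polynomial ring in $d-1$ variables over $k$ (in which every maximal ideal has height $d-1$), the ideal $\p$ has height $d$. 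Consequently $R_1 := (S_1)_\p$ is a $d$-dimensional local quadratic transform of $R$ in the sense of Definition~\ref{2.1}, and $W$ is a Rees valuation ring of the $\m(R_1)$-primary ideal $I_1 R_1$ in the local ring $R_1$.

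Next, I would pick a minimal reduction $J = (a_1, \ldots, a_d)R$ of $I$, which exists because $k$ is infinite. By Remark~\ref{2.25}(1), $J_1 = (f_1, \ldots, f_d)S_1$ with $f_i = a_i/x^r$ is a reduction of $I_1$ in $S_1$, and therefore $J_1 R_1 = (f_1, \ldots, f_d)R_1$ is a minimal reduction of $I_1 R_1$ in $R_1$. Applying Remark~\ref{2.25}(2) in the local ring $R_1$ with this minimal reduction, the assumption $W \in \Rees_{R_1} I_1 R_1$ yields that the images of $f_2/f_1, \ldots, f_d/f_1$ in $W/\m(W)$ are algebraically independent over $R_1/\m(R_1)$.

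To conclude, I would observe that $f_i/f_1 = a_i/a_1$ for each $i$ and that $k = R/\m \subseteq R_1/\m(R_1)$; algebraic independence over the larger field forces algebraic independence over $k$. Since $W$ birationally dominates $R$ via the chain $R \subseteq R_1 \subseteq W$, applying Remark~\ref{2.25}(2) to the minimal reduction $J$ of $I$ in $R$ gives $W \in \Rees I$, as required. The main technical point is the use of the finite-support hypothesis to force the center $\p$ to be a height-$d$ maximal ideal of $S_1$, so that $(S_1)_\p$ is a genuine $d$-dimensional local quadratic transform of $R$ on which Remark~\ref{2.25}(2) can be invoked; without finite support, $\p$ could have height less than $d$, $(S_1)_\p$ would fail to be a quadratic transform, and the conclusion would collapse, as will be illustrated by the promised Example~\ref{2.61}.
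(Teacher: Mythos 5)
Your argument is correct and follows essentially the same route as the paper's: both use Remark~\ref{2.4} to force the center of $W$ on $S_1$ to be a height-$d$ maximal ideal with residue field algebraic over $k$, both exploit the identity $f_i/f_1 = a_i/a_1$ for the transform $J_1$ of a $d$-generated reduction $J$, and both convert between membership in a Rees valuation set and algebraic independence of these quotients over $k$. The only cosmetic difference is that you localize at the center and invoke Remark~\ref{2.25}(2) twice, whereas the paper carries out the final transcendence-degree and Dimension Formula computation directly on $\overline{R[J/a_1]}$.
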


\begin{proof}
By Proposition~\ref{2.3}, we have $I_1 = S_1$ if and only if $\ord_R$ is the 
unique Rees valuation of $I$.  Assume $I_1 \ne S_1$, and  
let $J:=(a_1, \ldots, a_d)R$ be a reduction of $I$. 
Since $JS_1$ is a reduction of $IS_1$ and $I_1 \ne S_1$,
we have 
$$
J_1:=(f_1, \ldots, f_d)S_1 \ne S_1, \quad \text{where}\quad f_i:=\frac{a_i}{x^r}\quad\text{for}\quad 1\leq i\leq d.
$$
It follows that $J_1$ is a reduction of $I_1$. By  Remark~\ref{2.4}, we have $\hgt J_1 = d$.
Hence $\{f_1, \ldots, f_d \}$  is a minimal set of generators of $J_1$.  
 For each 
$W \in \Rees_{S_1} I_1 = \Rees_{S_1} J_1$, the images of $f_2/f_1, \ldots, f_d/f_1$ in $W/\m(W)$
are algebraically independent over $k$. 
Let $\frak q:=\m(W) \cap \overline{R\big[\frac{J}{a_1}\big]}$ be the 
center of $W$ on $A:=\overline{R\big[\frac{J}{a_1}\big]}$.
Since $\frac{f_2}{f_1} = \frac{a_2}{a_1}, \ldots, \frac{f_d}{f_1} = \frac{a_d}{a_1}$,
 we have that the images of
$\frac{a_2}{a_1}, \ldots, \frac{a_d}{a_1}$ in $\frac{W}{\m(W)}$ are in 
the subfield $\kappa\big(A/{\frak q}\big)$ of $\frac{W}{\m(W)}$. 
Hence  $\tr.\deg_k \kappa \big(A/{\frak q} \big)~= ~d-1$.
 By the Dimension Formula (\cite[page~119]{M}), $\hgt(\frak q) = 1$. Hence
$A_{\frak q} = W$. 
\end{proof}

Propositions~\ref{2.3} and ~\ref{2.5}  imply the following corollary.

\begin{corollary}\label{2.6}
Let the notation be as in  Setting~\ref{2.2}. 
If $I$ is  a finitely supported ideal in $R$ and $\ord_R$ is not a Rees valuation of $I$, then 
$$ \Rees_{S_1} I_1 = \Rees I.$$
\end{corollary}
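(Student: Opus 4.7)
The plan is to assemble the corollary by gluing together the two inclusions already proved in Propositions~\ref{2.3} and~\ref{2.5}. Proposition~\ref{2.5} gives the containment $\Rees_{S_1} I_1 \subseteq \Rees I$ under the single hypothesis that $I$ is finitely supported, so that half is already in hand without needing to invoke the assumption on $\ord_R$. What remains is to upgrade the containment $\Rees I \subseteq \Rees_{S_1} I_1 \cup \Rees \m$ from Proposition~\ref{2.3}(3) to $\Rees I \subseteq \Rees_{S_1} I_1$.

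For this upgrade, the key observation is that $\Rees \m$ is a singleton: by Remark~\ref{2.25}(3), $\Rees \m = \{(S_1)_{xS_1}\}$, and this DVR is precisely the valuation ring of $\ord_R$. Hence the hypothesis that $\ord_R$ is not a Rees valuation of $I$ is equivalent to $\Rees I \cap \Rees \m = \emptyset$. So if $V \in \Rees I$, then $V \neq (S_1)_{xS_1}$, and Proposition~\ref{2.3}(3) forces $V \in \Rees_{S_1} I_1$.

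Putting the two inclusions together yields $\Rees I = \Rees_{S_1} I_1$, and the proof is complete.

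I do not expect any genuine obstacle here, since this is essentially a bookkeeping argument: the only Rees valuation of $I$ that Proposition~\ref{2.3} cannot locate inside $\Rees_{S_1} I_1$ is the order valuation $\ord_R$, and that is exactly the one the hypothesis rules out. The mild point worth flagging is the compatibility of the two inclusions with the same choice of $x$: both propositions are stated relative to Setting~\ref{2.2}, where $x$ is chosen so that $xV = \m V$ for every $V \in \Rees I$, so the ring $S_1 = R[\tfrac{\m}{x}]$ and the transform $I_1$ are the same object in both statements, and no compatibility issue arises.
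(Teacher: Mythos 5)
Your argument is correct and is exactly the one intended by the paper, which derives the corollary by simply citing Propositions~\ref{2.3} and~\ref{2.5}; your write-up just makes explicit the bookkeeping step that $\Rees \m = \{(S_1)_{xS_1}\}$ is the order valuation ring of $R$, so the hypothesis removes the only possible extra member of $\Rees I$.
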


We use Proposition~\ref{2.60} to demonstrate that without the assumption 
in Proposition~\ref{2.5} that the 
ideal $I$ has finite support, there sometimes  exist Rees valuations of the  transform $I_1$ of $I$ 
that are not Rees valuations of $I$.

\begin{proposition} \label{2.60} With the notation of Setting~\ref{2.2}, if each Rees 
valuation of $I$ is centered on a maximal ideal of $S_1$ and $\dim (S_1/I_1) > 0$, 
then there exist Rees valuations of $I_1$ that are not Rees valuations of $I$, i.e.,
$\Rees_{S_1} I_1 \nsubseteq \Rees I$.  
\end{proposition}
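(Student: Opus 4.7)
The plan is to produce an element of $\Rees_{S_1} I_1$ whose center on $S_1$ is a non-maximal prime; the standing hypothesis then immediately rules such a valuation out of $\Rees I$, giving the desired failure of inclusion.

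First, since $\dim(S_1/I_1)>0$, I would choose a minimal prime $P$ of $I_1$ in $S_1$ that is not a maximal ideal. By the argument given in Remark~\ref{2.11}, $I_1$ lies in no height-one prime of $S_1$, so automatically $\hgt P \ge 2$, and in particular $P \ne xS_1$. Passing to the localization, the ideal $I_1 (S_1)_P$ is a proper nonzero ideal in the Noetherian local domain $(S_1)_P$, and therefore admits at least one Rees valuation ring $W$. Since $P$ is a minimal prime of $I_1$, the only prime of $(S_1)_P$ containing $I_1(S_1)_P$ is $P(S_1)_P$; hence $\m(W)\cap (S_1)_P=P(S_1)_P$, and contracting to $S_1$ yields $\m(W)\cap S_1=P$.

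Next, I would verify that $W$ is already a Rees valuation of $I_1$ viewed in $S_1$. Writing $W=\bigl(\overline{(S_1)_P[I_1/a]}\bigr)_{Q}$ for some $a\in I_1$ and some height-one prime $Q$ containing $I_1$, the fact that integral closure commutes with localization lets one rewrite $W=\bigl(\overline{S_1[I_1/a]}\bigr)_{Q'}$, where $Q'$ is the height-one prime of $\overline{S_1[I_1/a]}$ that contracts to $Q$. Since $Q'$ still contains $I_1$ and still has height one, one obtains $W\in \Rees_{S_1} I_1$.

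Finally, the choice of $x$ in Setting~\ref{2.2} guarantees that $S_1\subseteq V$ for every $V\in \Rees I$, so the hypothesis asserts that $\m(V)\cap S_1$ is a maximal ideal of $S_1$ for each such $V$. Since $\m(W)\cap S_1=P$ is non-maximal, we conclude $W\notin \Rees I$, which exhibits an element of $\Rees_{S_1} I_1 \setminus \Rees I$ as required. The step I expect to require the most care is the verification in the previous paragraph, namely that a Rees valuation of the localized ideal $I_1(S_1)_P$ descends to a Rees valuation of $I_1$ in $S_1$ with the height-one center preserved and still containing $I_1$; once this bookkeeping about affine blowup charts and their integral closures under localization is in hand, the rest of the argument is essentially formal.
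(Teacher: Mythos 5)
Your proposal is correct and follows essentially the same route as the paper: choose a non-maximal minimal prime $P$ of $I_1$ (which exists since $\dim(S_1/I_1)>0$), observe that some Rees valuation of $I_1$ is centered on $P$, and conclude it cannot lie in $\Rees I$ because the hypothesis forces every member of $\Rees I$ to be centered on a maximal ideal of $S_1$. The only difference is that the paper simply cites the standard fact that every minimal prime of an ideal is the center of at least one of its Rees valuations, whereas you supply its usual proof via localization at $P$ and compatibility of integral closure with localization; that verification is sound.
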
 

\begin{proof} Since $\dim(S_1/I_1) > 0$, there exists a minimal prime $P$ of $I_1$ 
such that $P$ is not a maximal ideal of $S_1$.  Every minimal prime of $I_1$ is the
center of at least one Rees valuation ring of $I_1$. Let $V \in \Rees_{S_1} I_1$ be centered
on $P$. By assumption, $V \notin \Rees I$.  
\end{proof}

We present in 
Example~\ref{2.61} a specific example where the hypotheses of Proposition~\ref{2.60} hold. 
By Proposition~\ref{2.5}, the ideal $I$ of Example~\ref{2.61} is
not finitely supported. 

\begin{example}\label{2.61}
Let $(R,\m,k)$ be a three-dimensional regular local ring with residue field $R/\m = k$
and maximal ideal $\m = (x,y,z)R$.   
Let
$$
J:=(x^2, y^3, z^5)R, \quad \text{and}\quad S_1:=R\big[\frac{\m}{z}\big] \quad \text{with}\quad  x_1:=\frac{x}{z} \quad y_1:=\frac{y}{z}.
$$
The ideal  $I:= (x^2, y^3, z^5, xy^2, xyz^2, y^2z^2, yz^4)R$ is the integral closure of $J$, and :
\begin{enumerate}
\item   The ideals $J$ and $I$ have a unique Rees valuaton $v$, where 
$$v(x)=15,\quad v(y)=10,\quad\text{ and}\quad  v(z)=6,$$
and the images of $\frac{x^2}{z^5}$ and $\frac{y^3}{z^5}$ in $k_v$ are algebrically
independent over $k$.

\item The center of $v$ on $S_1$ is the maximal ideal $(x_1, y_1, z)S_1$.

\item  $J_1=(x_1^2, zy_1^3, z^3)S_1\subset I_1=(x_1^2, zy_1^3, z^3, x_1y_1^2z, x_1y_1z^2, y_1^2z^2)S_1$. 
We have $J_1$ is a reduction of $I_1$ with $\hgt J_1=2$ and $\mu (J_1)=3$.
\item  The ideal $I$ is not finitely supported. 
\item  $\Rees_{S_1} I_1=\{V, W \}$, where $V$ and $W$ denote the valuation rings corresponding
to $v$ and $w$, respectively, and  where 
$$w(x)=3,\quad w(y)=2,\quad\text{ and}\quad  w(z)=2,$$
and the images of $\frac{x^2}{y^3}$ and $\frac{x^2}{z^3}$ in $k_w$ are algebrically
independent over $k$.

\item  $\Rees_{S_1} I_1 \nsubseteq \Rees I$. 
\end{enumerate}
\end{example}
\begin{proof}   
The assertion in item (1) is well-known, see for example \cite[page~209]{SH}, 
and item~(2)  follows from item~(1). 
Since $I_1 \subseteq (x_1, z)S_1$, we have $\hgt I_1 = 2$ as asserted in item~(3). 
Item (4) follows from  
Remark~\ref{2.4}.
For the proof of item $(5)$, since $v$ is not $\ord_R$, Proposition~\ref{2.3} implies
that $V \in \Rees_{S_1} I_1$. We have $I_1 \subseteq \frak p:=(x_1, z)S_1$. Moreover :
\begin{enumerate}
\item   $y_1$ is a unit in the two-dimensional regular local ring $(S_1)_{\frak p}$. Also
 $\frak p \cap R = \m$ and the image of $y_1$ in the field of fractions of $S_1/\frak p$ is
algebrically independent over $R/\m$. 
\item  $(I_1)_{\frak p}=(x_1^2, z)$ is a simple complete ideal in $(S_1)_{\frak p}$.
\item  $\Rees (I_1)_{\frak p}=\{W\}$, where   $w(x_1)=1$, $w(z)=2$, and the image of 
$\frac{x_1^2}{z}$ in $k_w$ is algebraically independent over the field of fractions 
of  $S_1/\frak p$.
\item  $\Rees_{S_1} (I_1)=\{V, W\}$, where $w(x)=3$, $w(y)=z$, and $w(z)=2$.
\end{enumerate}
\end{proof}

In Example~\ref{2.62}, the height of the transform ideal $I_1$ is less than the height of $I$
and yet $\Rees I = \Rees_{S_1} I_1$.

\begin{example}\label{2.62}
Let $(R,\m,k)$ be a three-dimensional regular local ring with residue field $R/\m = k$
and and maximal ideal $\m = (x,y,z)R$.   
Let
$$
J:=(x^2, y^2, z)R, \quad \text{and}\quad S_1:=R\big[\frac{\m}{x}\big] \quad \text{with}\quad  y_1:=\frac{y}{x} \quad z_1:=\frac{z}{x}.
$$
The ideal $I: = (x^2, xy, y^2, z)R$ is the integral closure of $J$, and :
\begin{enumerate}
\item  The ideals $J$ and $I$ have  a unique Rees valuation $v$,  where 
$$
v(x)=1,\quad v(y)=1,\quad\text{ and}\quad  v(z)=2,
$$
and the images of $\frac{x^2}{z}$ and $\frac{y^2}{z}$ in $k_v$ are algebraically
independent over $k$.

\item  $J_1 = I_1 = (x, z_1)S_1$,  and hence $\hgt I_1=2$ and $\mu (I_1)=2$.
\item  $I$ is not finitely supported.
\item  $\Rees_{S_1}I_1=\Rees I$. 
\end{enumerate}
\end{example}
\begin{proof} Item~(1) is well-known, cf. \cite[Theorem~10.3.5]{SH}. Item~(2) is
clear and  item~(3) follows from  Remark~\ref{2.4}. 
For the proof of item $(4)$,
since $\frak p:=I_1$ is  a height two prime in $S_1$.
Then :
\begin{enumerate}
\item  $y_1$ is unit in a two-dimensional regular local ring $(S_1)_{\frak p}$. 
Also  $\frak p \cap R = \m$ and the image of $y_1$ in the field of fractions of $S_1/\frak p$ is
algebraically independent over $R/\m$. 
\item  $\ord_{(S_1)_{\frak p}}$ is the unique Rees valuation of $\frak p$. To see that this 
valuation is $v$, observe that $v(x) = v(z_1) = 1$ and the image of $\frac{z_1}{x}$ in $k_v$ 
is algebraically independent over the subfield $(S_1)_{\frak p}/ \frak p(S_1)_{\frak p}$ of
$k_v$. This follows because the images of $y_1$ and  $\frac{z_1}{x}$ in $k_v$ are 
algebraically independent over $k$.
\end{enumerate}
\end{proof}

\section{Projectively full finitely supported complete ideals}  \label{s4}

We use the following definitions: 

\begin{definition}\label{4.31}
Let $I$ be a regular proper  ideal in a Noetherian ring $R$.
\begin{enumerate}
\item 
An ideal $J$ in $R$ is {\bf  projectively equivalent} to $I$, 
if some powers of $I$ and $J$ have the same integral closure, 
i.e.,  $\overline{I^j}=\overline{J^i}$ for some $i,j \in \mathbb Z^+$.
\item
The ideal $I$  is said to be {\bf  projective full}, 
if the only complete ideals that are projectively equivalent to $I$ are 
the ideals $\overline{I^k}$ with $k \in \mathbb Z^+$.
\end{enumerate}
\end{definition}

The concept of projective equivalence of ideals was introduced by Samuel in \cite{Sam}
and further developed by Nagata in \cite{Nag}. Making use of work of Rees in \cite{Rees},
McAdam, Ratliff, and Sally in \cite[Corollary~2.4]{MRS} prove that the set $\mathcal P(I)$
of complete ideals projectively equivalent to $I$ is linearly ordered by inclusion and 
discrete.  Moreover, if $I$ and $J$ are projectively equivalent, then $\Rees I = \Rees J$
and the values of $I$ and $J$ with respect to these Rees valuation rings are proportional
\cite[Proposition~ 2.10]{MRS}.  If there exists a projectively full ideal $J$ that is
projectively equivalent to $I$, then the set $\mathcal P(I)$ is said to be
{\bf projectively full}.  As described in \cite{CHRR1}, there is naturally associated to
the projective equivalence class of $I$ a numerical semigroup $\mathcal S(I)$. 
One has $\mathcal S(I) = \mathbb N$, the semigroup of nonnegative integers under addition, 
 if and only if $\mathcal P(I)$ is projectively full.

\begin{remark} \label{4.311} Let $I$ be a finitely supported complete ideal of a 
$d$-dimensional  regular local ring $R$, where $d \geq 2$.
\begin{enumerate}
\item
Every ideal projectively equivalent to $I$ is finitely supported.
\item
If an ideal $J$ is projectively equivalent to $I$, then  
the point bases $\mathcal B(I)$ and $\mathcal B(J)$ are proportional; 
indeed, if $\overline{I^n} = \overline{J^m}$ for positive integers $n$ and $m$,
then  $n\mathcal B(I) = m\mathcal B(J)$. In particular, if $I$ and $J$ are 
projectively equivalent, then $I$ and $J$ have the same base points.
\item
If the greatest common divisor (GCD) of the entries in $\mathcal B(I)$ is 1,
then the ideal $I$ is projectively full.
\item
Every special $*$-simple complete ideal is projectively full.
\end{enumerate}
\end{remark} 

\begin{proof}
These statements all follow from \cite[Remark~1.9 and Proposition~1.10]{L}. 
We write out the details for item (3). 
Let $\mathcal{BP}(I)=\{R_0, R_1, \ldots, R_s\}$, where $R_0:=R$ and 
 $\mathcal{B}(I)=\{\ord_{R_i}(I^{R_i})\}_{i=0}^{s}$.
Let $J$ be a  complete ideal  that is projectively equivalent to $I$.
Then $\overline{I^{n}}=\overline{J^{m}}$ for some $n,~ m \in \mathbb Z^+$.
By \cite[Proposition~1.10]{L}, we have 
 $n\mathcal{B}(I)=\mathcal{B}(I^{n})=\mathcal{B}(J^{m})=m\mathcal{B}(J).$
Let $a_i:=\ord_{R_i}(I^{R_i})$ for $i=0, \ldots, s$.
Then we have 
$$
n\mathcal{B}(I)=n\{a_0,a_1, \ldots, a_s\}=m\{b_0, b_1, \ldots, b_s\}= m\mathcal{B}(J),
$$
where $b_i  = \ord_{R_i}(J^{R_i})$ for $i=0, 1,\ldots, s$.
Since $GCD \{a_0,a_1,\ldots,a_s\}=1$, we have 
$$
n=n\Big( GCD \{a_0,a_1,\ldots,a_s\}\Big) =GCD \{na_0,na_1, \ldots, na_s\} =m\Big( GCD \{b_0, b_1, \ldots, b_s\}\Big).
$$
Hence $n=m r$, where $r = GCD\{b_0, b_1, \ldots, b_s\}$,  and therefore applying 
\cite[Remark~1.9 and Proposition ~1.10]{L},  we have
$$
\aligned
 \overline{I^{mr}}=\overline{J^{m}} &\Longrightarrow  \mathcal{B}(I^{mr})=\mathcal{B}(J^{m}) \\
                                         &\iff mr \mathcal{B}(I)=m \mathcal{B}(J)    \\
                                          &\iff  r \mathcal{B}(I)= \mathcal{B}(J)\\
                                          &\iff  \mathcal{B}(I^r)= \mathcal{B}(J)  \\
                                          &\iff \overline{I^r}=\overline{J} \\
                                          &\iff  \overline{I^r}=J, \qquad \text{since $J$ is complete}.
\endaligned
$$
Thus $I$ is  projective full. 
Item (4) is immediate from item (3), 
because the last nonzero  entry in the point basis of a special $*$-simple complete ideal is $1$. 
\end{proof}

\begin{remark} \label{4.3111} 
Let $(\alpha=R, \m)$ be a $d$-dimensional regular local ring with $d \ge 2$, and let $I$ be a
complete finitely supported $\m$-primary ideal.  Let $\mathcal B(I) = \{a_0,a_1,\ldots,a_s\}$ be
the point basis of $I$.   By Lipman's unique 
factorization thoerem: there exists a unique factorization as in Equation \ref{es30}
$$
\Big( \prod_{n_\delta < 0} P_{\alpha\delta}^{-n_\delta} \Big)*I ~=~ 
\prod_{n_\gamma > 0}P_{\alpha\gamma}^{n_\gamma}
$$
If  $d := GCD\{a_0,a_1,\ldots,a_s\}$, the proof of this unique factorization 
implies that each exponent $n_{\delta}$ and 
$n_{\gamma}$ is a multiple of $d$.  In particular, if there are no negative exponents 
in this factorization, then there exists an ideal $K$ such that $\overline{K^d} = I$. 

In the two-dimensional case,    the Zariski unique factorization theorem implies 
that  $\mathcal P(I)$  is projectively full,  and  $I$ is projectively  full if and only if the GCD of the entries in the point basis of $I$ is equal to 1.  
\end{remark} 

In the higher dimensional case, we ask:

\begin{question} \label{4.312} Let $I$ be a finitely supported complete ideal in 
a $d$-dimensional RLR.  
\begin{enumerate}
\item If $I$ is projectively full, does it follow that 
the GCD of the entries in the point basis of $I$ is 
equal to 1? 

\item If $I$ is $*$-simple, is $I$ projectively full?
\item 
Is $\mathcal P(I)$ always projectively full?
\end{enumerate}
\end{question}

\begin{remark} \label{4.71}
With the notation as in Setting~\ref{2.2}, it is possible that $I$ is projectively full in $R$, 
while the transform $I_1$ is not projectively full in $S_1$. For example, let $d = 2$ and
$\m = (x, y)R$, and let 
$$
I ~ = ~ (x^2, y)^2\m ~ = ~ (x^5, x^3y, xy^2, y^3)R.
$$
Since $\m$ is a simple factor of $I$, the ideal $I$ is projectively full in $R$, 
cf. \cite[Example~3.2]{CHRR2}. We have $S_1 = R[\frac{y}{x}]$. Let $y_1 = \frac{y}{x}$.
Then $IS_1 = x^3I_1$, where $I_1 = (x^2, xy_1, y_1^2)S_1$. Thus the ideal 
$I_1 = (x, y_1)^2S_1$ is not projectively full.
\end{remark} 

\begin{proposition} \label{4.7}   Let $I$ be a complete $\m$-primary ideal
of a regular local ring $(R,\m)$ of dimension $d \ge 2$. With the notation as 
in Setting~\ref{2.2}, if the transform $I_1$ of $I$ in $S_1$ is
projectively full, then  $I$ is projectively full in $R$. 
\end{proposition}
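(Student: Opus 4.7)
The plan is to show that every complete ideal $J \subseteq R$ projectively equivalent to $I$ has the form $J = \overline{I^k}$ for some $k \in \Z^+$. Starting from an equation $\overline{I^n} = \overline{J^m}$, I would transport it to $S_1$, apply the projective fullness of $I_1$ to the transforms, and then compare values on Rees valuations to return to $R$.

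First, $J$ is $\m$-primary since $\overline{J^m} = \overline{I^n}$ is, so its transform $J_1$ in $S_1$ is defined and $JS_1 = x^s J_1$ where $s := \ord_R J$; applying $\ord_R$ to $\overline{I^n} = \overline{J^m}$ gives $nr = ms$. The key technical step is to transfer the equation to $S_1$. For this I would establish two identities: (a) $\overline{K \cdot S_1} = \overline{\overline{K} \cdot S_1}$ for every ideal $K \subseteq R$, obtained by lifting integral equations from $R$ to $S_1$; and (b) $\overline{x^a L} = x^a \overline{L}$ for every ideal $L \subseteq S_1$ and $a \in \Z^+$, valid because $S_1$ is normal (in fact a UFD) and $x$ is a non-zero-divisor. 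Together with $I^n S_1 = x^{nr} I_1^n$ and $J^m S_1 = x^{ms} J_1^m$, these yield
$$
x^{nr}\,\overline{I_1^n} ~=~ \overline{I^n S_1} ~=~ \overline{J^m S_1} ~=~ x^{ms}\,\overline{J_1^m},
$$
and after cancelling $x^{nr} = x^{ms}$ we obtain $\overline{I_1^n} = \overline{J_1^m}$. Thus $\overline{J_1}$ is a complete ideal of $S_1$ projectively equivalent to $I_1$, and by projective fullness there exists $k \in \Z^+$ with $\overline{J_1} = \overline{I_1^k}$.

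To return to $R$, note that every Rees valuation $V$ of $I$ satisfies $S_1 \subseteq V$ (since $\m V = xV$ by Setting~\ref{2.2}); its valuation $v$ therefore gives $v(J_1) = k v(I_1)$. Combining this with the decompositions $v(J) = s v(x) + v(J_1)$ and $v(I) = r v(x) + v(I_1)$, the relation $s = (n/m) r$, and the equation $v(J) = (n/m) v(I)$ (which follows by applying $v$ to $\overline{J^m} = \overline{I^n}$), a short computation reduces to $((n/m) - k)\, v(I_1) = 0$. Since the hypothesis requires $I_1$ to be proper in $S_1$, Proposition~\ref{2.3} yields a Rees valuation of $I$ distinct from $\ord_R$ and lying in $\Rees_{S_1} I_1$, for which $v(I_1) > 0$; hence $n = km$, so $v(J) = k v(I)$ for every $V \in \Rees I = \Rees J$. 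Since $J$ and $\overline{I^k}$ are complete $\m$-primary ideals with the same Rees valuations and equal values on each, they coincide: $J = \overline{I^k}$.

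The main obstacle is the transfer of integral closure through the transform, encapsulated in the identity $\overline{I^n S_1} = x^{nr}\,\overline{I_1^n}$. Once this is in place, the remainder is bookkeeping on valuation values together with the standard characterization of a complete ideal by its Rees valuations.
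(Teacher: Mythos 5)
Your proof is correct and follows essentially the same route as the paper's: transport the equation $\overline{I^n}=\overline{J^m}$ to $S_1$ via $\overline{I^nS_1}=x^{nr}\,\overline{I_1^n}$, match the powers of $x$ and cancel to conclude that $I_1$ and $J_1$ are projectively equivalent, and then invoke the projective fullness of $I_1$. Your closing valuation-theoretic bookkeeping that yields $n=km$ and $J=\overline{I^k}$ is simply a more explicit rendering of the paper's terse final step ``$n=mt$ \dots\ it follows that $\overline{I^t}=\overline{J}$.''
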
 

\begin{proof} Let $J$ be an ideal in $R$ that is projectively equivalent to $I$,
say $\overline{I^n} = \overline{J^m}$  with $n, m$ positive integers. Assume
that $r = \ord_RI$ and $s = \ord_RJ$. Then $IS_1 = x^rI_1$ and $JS_1 = x^sJ_1$.
Thus taking complete transforms, we have
$$
x^{rn}\overline{I_1^n}~ = ~\overline{I^nS_1}~ = ~ \overline{J^mS_1} ~ = ~ x^{sm}\overline{J_1^m}.
$$
Since neither of the ideals $I_1$ nor  $J_1$ in the UFD $S_1$ 
is  contained in a proper principal ideal
of $S_1$, we have $rn = sm$ and $\overline{I_1^n} = \overline{J_1^m}$. Thus $I_1$ and $J_1$ are
projectively equivalent. Since $I_1$ is projectively full, $n = mt$ for some positive integer 
$t$. It follows that $\overline{I^t} = \overline{J}$.
\end{proof}

\section{The structure of special $*$-simple complete ideals} \label{c4}

\begin{setting} \label{7.1}
We consider the structure of special $*$-simple complete ideals as in Definition~\ref{2.13}.
In the case where $\dim R = 2$ and $R \prec T$, the special $*$-simple complete ideal $P_{RT}$ has
a unique Rees valuation $\ord_T$.  In the higher dimensional case, the ideal $P_{RT}$ has
$\ord_T$ as a Rees valuation and often also has other Rees valuations. We observe in
Proposition~\ref{4.0} that the other Rees valuations of $P_{RT}$ are in the set
$\{\ord_{R_i} \}_{i=0}^{n-1}$, where 

\begin{equation} \label{e2}
R=:R_0 ~\subset~ R_1 ~\subset~ R_2~ \subset \cdots \subset~ R_n=T \quad (n \geq 2),
\end{equation}
where $R_{i+1}$ is a local quadratic transform of $R_i$ for $i=0,1, \ldots, n-1$, and
$\dim R = \dim T$.  The residue field $R_n/\m_n$ of $R_n$ is a finite algebraic
extension of the residue field $R_0/\m_0$ of $R_0$. If $R_0/\m_0 = R_n/\m_n$,  
 we observe in Corollary~\ref{4.11} that
the other Rees valuations of $P_{RT}$ are in the set
$\{\ord_{R_i} \}_{i=0}^{n-2}$.
\end{setting}

\begin{definition} \label{7.11} We say {\bf there is no change of direction} for 
the local quadratic sequence $R_0$ to $R_n$ in Equation~\ref{e2} if there exists 
an element $x \in \m_0$ that is part of a minimal generating set of $\m_n$.  We
say {\bf there is a change of direction} between $R_0$ and $R_n$ if $\m_0 \subseteq \m_n^2$. 
\end{definition} 

\begin{remark} \label{7.2} With notation as in Setting~\ref{7.1},  assume that 
$\dim R = \dim T$, and let $I = P_{R_0R_n}$. 
\begin{enumerate}
\item 
By \cite[Corollary~2.2]{L}, the transform $I^{R_j} = P_{R_jR_n}$ for
all $j$ with $0 \le j \le n$. By Proposition~\ref{2.5}, we have
$\Rees_{R_j} I^{R_j} \subseteq \Rees I$.  Thus for each $j$ with $0 \le j \le n$, we have
$$
\Rees_{R_j} P_{R_jR_n} ~ = ~ \Rees_{R_j} I^{R_j} ~ \subseteq ~ \Rees I,
$$
and the number of Rees valuations of $I$ is greater than or equal to the
number of Rees valuations of $P_{R_jRn}$. 
\item
If  $R_0/\m_0 = R_n/\m_n$, then there is no change of direction in the local quadratic sequence from $R_0$ to $R_n$ 
$ \quad \iff \quad \ord_{R_0}(I)=1 \quad\iff \quad \mathcal{B}(I)=\{1, 1, \ldots, 1,1\}.$
\end{enumerate}
\end{remark}

\begin{proposition}\label{4.0}
Let $(R, \m, k)$ be $d$-dimensional regular local ring, where $d \geq 2$, and let $R \prec T$
with $\dim T = d$. Assume the sequence of local quadratic transforms from $R$ to $T$ is as
in Equation~\ref{e2}. 
Let  $P_{R_0R_n}$ be the associated  special $*$-simple complete $\m$-primary ideal in $R$,
and let $V_i$ denote the valuation ring of $\ord_{R_i}$ for $0\le i \le n$. 
Then we have
$$
 \{ V_n \} ~ \subseteq ~ \Rees P_{R_0R_n} ~ \subseteq  ~
\{V_0,~ V_1, ~\ldots, V_{n-1}, ~V_n\}.
$$
\end{proposition}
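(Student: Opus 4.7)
The plan is to prove both inclusions simultaneously by induction on the length $n$ of the quadratic sequence from $R=R_0$ to $T=R_n$. In the base case $n=0$ one has $R_0=R_n=T$, and Lipman's characterization (Definition~\ref{2.13}) forces $P_{RR}=\m_R$: indeed $\m_R$ is $*$-simple and complete, and for any proper local quadratic transform $A\succ R$ the ideal $\m_R A$ is principal, so $\m_R^A=A$, whence by the uniqueness clause of Proposition~2.1 of Lipman $P_{RR}=\m_R$. The claim now reduces to the identity $\Rees\m_R=\{V_0\}$ from Remark~\ref{2.25}(3).

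For the inductive step, set $I=P_{R_0R_n}$ and assume the conclusion for the sequence $R_1\prec\cdots\prec R_n$ of length $n-1$: namely $\{V_n\}\subseteq\Rees_{R_1}P_{R_1R_n}\subseteq\{V_1,\dots,V_n\}$. Since $\dim T=d$, all $R_i$ have dimension $d$; by Lipman's theory $I$ is finitely supported with base-point set exactly $\{R_0,\dots,R_n\}$, using uniqueness of the quadratic sequence combined with Definition~\ref{2.13}. I next choose $x\in\m_0\setminus\m_0^2$ simultaneously satisfying the two Zariski-open conditions that $R_1=(S_1)_{\p}$ for some prime $\p$ of $S_1:=R[\m/x]$ and that $xV=\m V$ for every $V\in\Rees I$; such an $x$ exists by the infinite residue field hypothesis of Setting~\ref{2.2} (or, if needed, after a faithfully flat residue-field extension that preserves all Rees data). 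Write $IS_1=x^r I_1$.

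By Remark~\ref{2.4}, $\dim(S_1/I_1)=0$, so every associated prime of $I_1$ is maximal in $S_1$ and corresponds to a first-level base point of $I$. The key observation is that $R_1$ is the \emph{only} first-level base point of $I$: any other first local quadratic transform $A$ of $R_0$ fails to lie on the unique quadratic sequence to $R_n$, hence $A\not\prec R_n$, and so $\overline{(P_{R_0R_n})^A}=A$ by Definition~\ref{2.13}. Thus $I_1$ is $\p$-primary in $S_1$, and since $I_1R_1$ is the transform of $I$ in $R_1$, Lipman's Corollary~2.2 identifies $I_1R_1=P_{R_1R_n}$. Consequently
$$
\Rees_{S_1}I_1 \;=\; \Rees_{R_1}(I_1R_1) \;=\; \Rees_{R_1}P_{R_1R_n},
$$
and the inductive hypothesis yields $\{V_n\}\subseteq\Rees_{S_1}I_1\subseteq\{V_1,\dots,V_n\}$. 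Combining Proposition~\ref{2.3} (upper bound using $\Rees\m=\{V_0\}$) and Proposition~\ref{2.5} (lower bound, which applies since $I$ is finitely supported) gives
$$
\{V_n\} \;\subseteq\; \Rees_{S_1}I_1 \;\subseteq\; \Rees I \;\subseteq\; \Rees_{S_1}I_1\cup\{V_0\} \;\subseteq\; \{V_0,V_1,\dots,V_n\},
$$
completing the induction.

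The main obstacle is the primary identification in the third paragraph: showing that $R_1$ is the unique first-level base point of $I$, so that the ideal $I_1\subset S_1$ collapses to a primary ideal at a single maximal ideal, permitting the clean passage to $R_1$ and the use of the inductive hypothesis. Everything else is a bookkeeping application of Propositions~\ref{2.3},~\ref{2.5} and Lipman's transform formula $I^{R_j}=P_{R_jR_n}$. The minor technical point of simultaneously realizing the two Zariski-open conditions on $x$ is standard given an infinite residue field.
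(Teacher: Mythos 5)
Your proof is correct and follows essentially the same route as the paper's: induction along the quadratic sequence, the identification of $R_1$ as the unique first-neighborhood base point so that $\Rees_{S_1}I_1=\Rees_{R_1}P_{R_1R_n}$ via Lipman's transform formula, and Propositions~\ref{2.3} and~\ref{2.5} for the two inclusions. Your explicit attention to choosing $x$ so that $R_1$ is simultaneously a localization of $S_1$ and $xV=\m V$ for all $V\in\Rees I$ is a point the paper passes over silently, and is a welcome (if standard) refinement rather than a different argument.
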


\begin{proof}
Let $I:=P_{R_0R_n}$.  Since $I^{R_n} = P_{R_nR_n}$ is the maximal ideal of $R_n$,
we have $\{ V_n \}  \subseteq  \Rees P_{R_0R_n}$. Let  $V \in \Rees I$. 
We use the notation of Setting~\ref{2.2}.
Then $IS_1=x^r I_1$, where $r:=\ord_R (I)$. By \cite[Corollary~(2.2)]{L}, 
$I$ is a finitely supported ideal in $R$ and 
$$\mathcal {BP}(I)=\{R_0, R_1, \ldots, R_n\}.$$  
Hence
 $R_1$ is the only base point of $I$ in the first neighborhood of $R$, and $I_1$ is
contained in  a unique  maximal ideal $N_1$ in $S_1$. Hence $R_1 =(S_1)_{N_1}$ 
and $\m_1:=N_1R_1$.
By Proposition~\ref{2.3}, we have 
$$
\aligned
\Rees P_{R_0R_n}=\Rees I &\subseteq \Rees_{S_1} I_1 \cup \Rees \m \\
                         &=\Rees_{R_1} I_1R_1 \cup \Rees \m \\
                         &=\Rees_{R_1} P_{R_1R_n} \cup \Rees \m.
\endaligned
$$
Let $\m_i$ denote the maximal ideal of $R_i$ for $i=1, \ldots, n$.
Since $I^{R_1} = P_{R_1R_n}$, 
a simple induction argument proves that 
$$
\aligned
\Rees P_{R_0R_n} &\subseteq  \Rees_{R_1} P_{R_1R_n} \cup \Rees \m\\
                 &\subseteq  \Rees_{R_2} P_{R_2R_n} \cup \Rees_{R_1} \m_1 \cup \Rees \m\\
                 &\subseteq \cdots \\
                 &\subseteq  \Rees_{R_n} \m_n \cup \Rees_{R_{n-1}} \m_{n-1}\cup 
\Rees_{R_{n-2}} \m_{n-2}\cup \cdots \cup \Rees_{R_1} \m_1 \cup \Rees \m.
\endaligned
$$
\end{proof}

We describe in Remark~\ref{4.1} the structure  of a special $*$-simple complete ideal 
$P_{R_0R_1}$ in the case where  $R_1/\m_1 = R_0/\m_0$. This case
 always occurs if $R_0/\m_0$ is algebraically closed.

\begin{remark}\label{4.1}
Let $R = R_0$ be a $d$-dimensional regular local ring and let $R_1$ be a local quadratic 
transform of $R$ with $\dim R_1 = d$. Let $P_{R_0R_1}$ be the associated 
special $*$-simple complete ideal of $R_0$.
With notation as in Setting~\ref{2.2}, we may assume that 
$$
S_1~= ~R_0\Big[\frac{\m_0}{x_1}\Big]=R_0\Big[\frac{x_2}{x_1}, \ldots, \frac{x_d}{x_1}\Big] ~ \subset ~ R_1, 
$$
where  $\m_0:=\m=(x_1, \ldots, x_d)R_0$. Then $R_1 =(S_1)_{N_1}$, 
where $N_1$ is a maximal ideal in $S_1$ containing $x_1S_1=\m S_1$.
Assume that  $R_1/\m_1 = R_0/\m_0$.   
\begin{enumerate}
\item
Then $N_1=(x_1, \frac{x_2}{x_1}-a_2, \ldots, \frac{x_d}{x_1}-a_d)S_1$, 
where $a_2, \ldots, a_d \in R_0$. We have 
$$
P_{R_0R_1}=(x_1^2, x_2-a_2x_1, \ldots, x_d-a_dx_1)R_0,
$$
and the ideal $P_{R_0R_1}$ has  unique Rees valuation  $w:=\ord_{R_1}$, where  
$$
w(x_1)=1\quad\text{ and}\quad w(x_i-a_ix_1)=2\quad \text{ for}\quad i=2, \ldots, d,
$$
and the images of $\frac{x_2-a_2x_1}{x_1^2}, \ldots, \frac{x_d-a_dx_1}{x_1^2}$ in $k_w$ 
are algebraically independent over $R_0/\m_0$. Thus  $\Rees P_{R_0R_1}=\Rees_{R_1} \m_1$.
\item $\mathcal{BP}(P_{R_0R_1})=\{R_0, R_1\}$.
\item $\mathcal{B}(P_{R_0R_1})=\{1, 1\}$.
\item 
The ideal $I:=P_{R_0R_1}$ is a normal ideal cf. \cite{Go}.  Hence the Rees algebra
$R[It]$ is a normal domain. Also 
$\frac{R[It]}{Q}\cong (\frac{R}{\m})[T_1, \ldots, T_d]$ is a polynomial ring 
in $d$-variables over $R/\m$,
where $\Min (\m R[It])=\{Q\}$ and $Q=\m R[It]$.
\end{enumerate}
\end{remark}

As a consequence of  Proposition~\ref{4.0} and Remark~\ref{4.1}, we have 

\begin{corollary}\label{4.11}
Let the notation be as in Proposition~\ref{4.0}.
Assume that $R_0/\m_0 = R_n/\m_n$. 
Then we have
$$
 \{ V_n \} ~ \subseteq ~ \Rees P_{R_0R_n} ~ \subseteq  ~
\{V_0,~ V_1, ~\ldots, V_{n-2}, ~V_n\}.
$$
\end{corollary}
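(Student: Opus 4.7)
The plan is to adapt the inductive argument from the proof of Proposition~\ref{4.0}, but to terminate the induction one step earlier, at level $n-1$ rather than at level $n$. This works because the hypothesis $R_0/\m_0 = R_n/\m_n$ forces equality of residue fields at every intermediate step, which in turn lets us apply Remark~\ref{4.1} to $P_{R_{n-1}R_n}$.

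First, I would observe that for each $i$, the local quadratic transform $R_{i+1}$ of $R_i$ yields a residue field $R_{i+1}/\m_{i+1}$ that is a finite algebraic extension of $R_i/\m_i$. Composing these extensions along the sequence in Equation~\ref{e2} shows that $R_n/\m_n$ is a finite algebraic extension of $R_0/\m_0$, and the hypothesis $R_0/\m_0 = R_n/\m_n$ then forces $R_i/\m_i = R_{i+1}/\m_{i+1}$ for every $0 \le i \le n-1$. In particular, $R_{n-1}/\m_{n-1} = R_n/\m_n$, which is exactly the hypothesis needed to apply Remark~\ref{4.1} to the local quadratic transform $R_{n-1} \subset R_n$.

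Next, by the same telescoping argument used at the end of the proof of Proposition~\ref{4.0}, combined with the fact from Remark~\ref{7.2}(1) that $I^{R_j} = P_{R_jR_n}$ for each $0 \le j \le n$, I get the chain of inclusions
$$
\Rees P_{R_0R_n} ~\subseteq~ \Rees_{R_1} P_{R_1R_n} \cup \Rees \m_0
~\subseteq~ \cdots ~\subseteq~ \Rees_{R_{n-1}} P_{R_{n-1}R_n} \cup \bigcup_{i=0}^{n-2} \Rees_{R_i} \m_i.
$$
Each $\Rees_{R_i}\m_i$ is just $\{V_i\}$ by Remark~\ref{2.25}(3), so the right-hand union over $0 \le i \le n-2$ contributes exactly $\{V_0, V_1, \ldots, V_{n-2}\}$. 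Remark~\ref{4.1}(1), applied to the pair $R_{n-1} \prec R_n$ (where the residue fields agree by the first paragraph), gives $\Rees_{R_{n-1}} P_{R_{n-1}R_n} = \{V_n\}$. Combining these yields
$$
\Rees P_{R_0R_n} ~\subseteq~ \{V_0, V_1, \ldots, V_{n-2}, V_n\},
$$
and the lower bound $\{V_n\} \subseteq \Rees P_{R_0R_n}$ is already part of Proposition~\ref{4.0}.

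The main point, and essentially the only nontrivial ingredient beyond Proposition~\ref{4.0}, is recognizing that the residue-field equality $R_0/\m_0 = R_n/\m_n$ propagates down to $R_{n-1}/\m_{n-1} = R_n/\m_n$; once that is in hand, Remark~\ref{4.1} does the work of eliminating $V_{n-1}$ from the list of possible Rees valuations by forcing $P_{R_{n-1}R_n}$ to be one-fibered with unique Rees valuation $V_n$. I do not anticipate a real obstacle — the argument is essentially a shortening of the induction that was already set up in Proposition~\ref{4.0}.
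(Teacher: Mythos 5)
Your proposal is correct and follows essentially the same route as the paper, which derives the corollary directly from Proposition~\ref{4.0} together with Remark~\ref{4.1}: stop the telescoping inclusion one step early and use that $P_{R_{n-1}R_n}$ has unique Rees valuation $\ord_{R_n}$ once the residue fields agree. Your observation that $R_0/\m_0 = R_n/\m_n$ forces equality at every intermediate step (since each extension in the tower is finite algebraic) is exactly the point the paper leaves implicit, and it is argued correctly.
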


With notation as in Setting~\ref{7.1}, we illustrate in Example~\ref{4.2} the structure
of the special $*$-simple complete ideal $I = P_{R_0R_n}$ in the case where $R_0/\m_0 = R_n/\m_n$
and there is no change of direction. We assume $\dim R_0 = 3$. The situation is
similar for $\dim R_0  > 3$.

\begin{example}\label{4.2}
Let $(R, \m_0, k)$ be a $3$-dimensional regular local ring with  maximal ideal $\m_0 =(x,y,z)R$,
and let $a_1, \ldots, a_n$ and $b_1, \ldots, b_n$ be elements in $R$.
Consider the following  finite sequence of  local quadratic transformations 
$$
 R=:R_0 \subset  {^xR_1} \subset  {^{xx}R_2} \subset {^{xxx}R_3} \subset \cdots \subset 
^{\overbrace{x \cdots  x}^{\text{$n$ times}}}R_n,
$$
where for $i=0, 1, 2,\ldots, n-1$ we define $S_{i+1}$ and $
^{\overbrace{x \cdots  x}^{\text{$i+1$ times}}}R_{i+1} = R_{i+1}$ inductively by
$$
\begin{aligned}
& S_1 ~ :=  ~R_0[\frac{\m_0}{x}], \quad N_1 ~:= ~ (x, \frac{y-a_1x}{x}, \frac{z-b_1x}{x})S_1 \quad
R_1 ~:= ~ (S_1)_{N_1} \quad \m_1 ~:= ~ N_1R_1 \\
& \cdots  \\
& S_{i+1} ~:= ~ R_i[\frac{\m_i}{x}], \quad N_{i+1} ~:= ~ 
(x, \frac{y-a_1x - \cdots - a_{i+1}x^{i+1}}{x^{i+1}}, 
\frac{z-b_1x - \cdots - b_{i+1}x^{i+1}}{x^{i+1}})S_{i+1} \\
& R_{i+1} ~:= ~ (S_{i+1})_{N_{i+1}} \quad \m_{i+1} ~:= ~ N_{i+1}R_{i+1}. 
\end{aligned}
$$

 Then for $i=0,1,\ldots,n,$ we have
\begin{enumerate}
\item 
The order valuation $v_i:=\ord_{R_i}$ has values $v_i(x)=1$ and 
$$
v_i\Big(\frac{y-a_1x - \cdots - a_{i}x^{i}}{x^{i}}\Big) \quad ~=  \quad
v_i\Big(\frac{z-b_1x - \cdots - b_{i}x^{i}}{x^{i}}\Big) ~= ~ 1.
$$
and the images of 
$$
\frac{y-a_1x - \cdots - a_{i}x^{i}}{x^{i+1}} \quad \text{and} \quad 
\frac{z-b_1x - \cdots - b_{i}x^{i}}{x^{i+1}}
$$
in the residue field $k_{v_i}$ of $V_i$ are algebraically independent over $R_0/\m_0$.
\item 
The special $*$-simple complete $\m$-primary ideal is  
$$P_{R_0R_i} =(x^{i+1},~ y -a_1x - \cdots - a_ix^i, ~  z - b_1x - \cdots - b_ix^i)R.$$
\item
$\mathcal {BP}(P_{R_0R_i})=\{R_0,  R_1,  R_2, \ldots, R_i\}.$
\item 
$\mathcal{B}(P_{R_0R_i}) =\{1,~ 1,~ 1,~ \ldots,~ 1\}.$
\item 
The special $*$-simple complete $\m$-primary ideal $P_{R_0R_i}$ has a 
unique Rees valuation $\ord_{R_i}$. 
That is, $\Rees (P_{R_0R_i})=\Rees_{R_i} \m_i$.
\item
The ideal $P_{R_0R_i}$ is normal.
\item 
Let $I:=P_{R_0R_i}$. Then $\m R[It]$ has a unique minimal prime $Q:=\m R[It]$ 
and $\frac{R[It]}{Q}$ is a polynomial ring in $3$-variables over $R/\m$.
\end{enumerate}
\end{example}

\begin{theorem}\label{4.40}
Let the notation be as in  Setting~\ref{7.1}. Assume that $R_0/\m_0 = R_n/\m_n$. 
Then the following are equivalent: 
\begin{enumerate}
\item $\Rees P_{R_0R_n}=\ \Rees_{R_n} \m_n$, i.e., $\ord_{R_n}$ is the unique Rees valuation of
$P_{R_0R_n}$. 
\item 
Either $\dim R_0 = 2$, or there is no change of direction in the local quadratic  sequence 
given in Equation~\ref{e2}.
\end{enumerate} 
\end{theorem}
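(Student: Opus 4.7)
The plan is to prove the equivalence in two directions.  For $(2) \Rightarrow (1)$ with $\dim R_0 = 2$, products of complete ideals in a two-dimensional regular local ring are complete, so the $*$-simple complete ideal $P_{R_0R_n}$ is in fact simple, and Zariski's classical theory yields that it has a unique Rees valuation; by Proposition~\ref{4.0} this valuation must be $\ord_{R_n}$.  For $\dim R_0 \ge 3$ with no change of direction, I would proceed by induction on $n$, with base case $n=1$ handled by Remark~\ref{4.1}.  For the inductive step, choose $x \in \m_0 \setminus \m_0^2$ that is a minimal generator of $\m_n$; then $x$ remains a minimal generator of each intermediate $\m_i$ (since $x \in \m_1$ while $\m_1^2 \subseteq \m_n^2$ and $x \notin \m_n^2$), and the shorter sequence $R_1 \subset \cdots \subset R_n$ also has no change of direction.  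Setting $S_1 = R_0[\m_0/x]$ with $R_1 = (S_1)_{N_1}$, Lipman's transform compatibility gives $(P_{R_0R_n})^{R_1} = P_{R_1R_n}$, so the induction hypothesis yields $\Rees_{R_1} P_{R_1R_n} = \{\ord_{R_n}\}$.  Since $R_1$ is the unique base point of $P_{R_0R_n}$ in the first neighborhood of $R_0$, the transform $I_1 := (P_{R_0R_n})^{S_1}$ is contained in the single maximal ideal $N_1$ of $S_1$, giving $\Rees_{S_1} I_1 = \{\ord_{R_n}\}$, and Proposition~\ref{2.3}(3) then restricts $\Rees P_{R_0R_n} \subseteq \{\ord_{R_n}, \ord_{R_0}\}$.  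To exclude $\ord_{R_0}$, I would follow the explicit computation indicated in Example~\ref{4.2}: the no-change-of-direction hypothesis allows writing $P_{R_0R_n} = (x^{n+1},\, y_2 - f_2(x), \ldots, y_d - f_d(x))R_0$ for a regular system of parameters $x, y_2, \ldots, y_d$ of $\m_0$ and polynomials $f_j(x) \in R_0$, and a direct Rees-algebra computation then shows that $\m R[It]$ has a single minimal prime $Q = \m R[It]$ with $R[It]/Q$ a polynomial ring in $d$ variables over $R/\m$, so $P_{R_0R_n}$ is normal with a unique Rees valuation.

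For $(1) \Rightarrow (2)$, I would argue the contrapositive: assume $\dim R_0 \ge 3$ and that a change of direction occurs in the sequence from $R_0$ to $R_n$, and exhibit a Rees valuation of $P_{R_0R_n}$ distinct from $\ord_{R_n}$.  By Remark~\ref{7.2}(2), the change-of-direction hypothesis is equivalent to $\ord_{R_0}(P_{R_0R_n}) \ge 2$, so the point basis contains an entry $\ge 2$.  Let $k$ be the smallest index $\ge 1$ for which $x_0 \in \m_k^2$, where $x_0 \in \m_0$ is an element generating the principal ideal $\m_0 R_n$; then $x_0$ is a minimal generator of each $\m_i$ for $0 \le i \le k-1$ while the direction genuinely changes at step $R_{k-1} \to R_k$, meaning the generator $y$ with $y R_k = \m_{k-1} R_k$ is not a unit multiple of $x_0$ modulo $\m_{k-1}^2$.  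I would then show that $\ord_{R_{k-1}}$ is a Rees valuation of $P_{R_0R_n}$ by exhibiting an explicit minimal reduction $J = (a_1, \ldots, a_d)R_0$ of $P_{R_0R_n}$ and applying the criterion in Remark~\ref{2.25}(2): the direction change at stage $k-1 \to k$ supplies the extra algebraic independence in the residue field of $\ord_{R_{k-1}}$ needed to place $\ord_{R_{k-1}}$ in $\Rees P_{R_0R_n}$.

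The main obstacle lies in this last step of $(1) \Rightarrow (2)$: the reduction $J$ must be chosen to reflect the structure of $P_{R_0R_n}$ inherited from the transforms $P_{R_iR_n}$ for $0 \le i < k$, and the new independent element supplied by the direction change at stage $k-1 \to k$ must be verified to survive pushforward through the remaining stages $R_k, \ldots, R_n$ so as to actually produce an algebraically independent set of ratios in the residue field of $\ord_{R_{k-1}}$.  An equivalent formulation would be to locate a minimal prime of $\m R[It]$ in the Rees algebra whose contraction is the center of $\ord_{R_{k-1}}$ on $R$, distinct from the minimal prime corresponding to $\ord_{R_n}$; the patterns illustrated in Examples~\ref{4.3} and \ref{4.4}, where each change of direction appears to contribute an additional Rees valuation, strongly suggest this is the correct construction.
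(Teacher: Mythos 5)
Your $(2)\Rightarrow(1)$ direction is essentially the paper's argument (Zariski in dimension two; for $d\ge 3$ an induction along the sequence using $\,(P_{R_0R_n})^{R_1}=P_{R_1R_n}$, Proposition~\ref{2.3}, and the explicit presentation of $P_{R_0R_n}$ from Example~\ref{4.2} to exclude $\ord_{R_0}$), and it is fine. The problem is in $(1)\Rightarrow(2)$. You propose to locate the \emph{first} change of direction --- the smallest $k$ with $x_0\in\m_k^2$ --- and to show that $\ord_{R_{k-1}}$ is a Rees valuation of $P_{R_0R_n}$. That claim is false in general, and the paper's own Example~\ref{4.3} is a counterexample: there $n=2$, the only change of direction occurs at the step $R_1\to R_2$ (so $k=2$ and $k-1=1$), yet $\Rees P_{R_0R_2}=\{\ord_{R_0},\ord_{R_2}\}$ and $\ord_{R_1}$ is \emph{not} a Rees valuation. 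Indeed the generators of $P_{R_0R_2}$ have $\ord_{R_1}$-values $3,3,4,4,4,4$, so the elements of minimal $\ord_{R_1}$-value produce only the single ratio $(z-b_2y)/x^2$, whose image generates a residue field extension of transcendence degree $1<d-1$. Thus the ``extra algebraic independence supplied by the direction change at stage $k-1\to k$'' does not materialize at $R_{k-1}$; the extra Rees valuation sits at an earlier point. Your closing heuristic that each change of direction contributes a Rees valuation is also not literally correct: in Example~\ref{4.4} the direction changes at both steps $R_1\to R_2$ and $R_2\to R_3$, but $\ord_{R_2}$ is absent from $\Rees P_{R_0R_3}$.

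The paper works from the tail instead: choose $j$ \emph{minimal} such that there is no change of direction from $R_{j+1}$ to $R_n$, so that the segment $R_j\subset R_{j+1}\subset\cdots\subset R_n$ consists of one blow-up in a direction $x$ followed by blow-ups all in a second direction $y$. By Remark~\ref{7.2}(1) one has $\Rees_{R_j}P_{R_jR_n}\subseteq\Rees P_{R_0R_n}$, so it suffices to produce a second Rees valuation for $P_{R_jR_n}$. This is done by the explicit computation of Example~\ref{4.300}: $P_{R_jR_n}$ is identified as the valuation ideal $\{\alpha\in\m_j : v_n(\alpha)\ge v_n(y^{\,n-j}),\ \ord_{R_j}(\alpha)\ge n-j\}$, which has order $n-j\ge 2$ at $R_j$ and contains $y^{\,n-j}$, $z^{\,n-j}$ and $xh_0$ among its elements of minimal order, and the ratios of these are algebraically independent in the residue field of $\ord_{R_j}$; hence $\ord_{R_j}\in\Rees P_{R_jR_n}$ in addition to $\ord_{R_n}$. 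To repair your outline you must replace ``first change of direction, $\ord_{R_{k-1}}$'' by this choice of $j$ (or otherwise prove that this particular $\ord_{R_j}$ is a Rees valuation); as written, the key step of your $(1)\Rightarrow(2)$ argument fails.
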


\begin{proof}
$(2) \implies (1)$:  If $\dim R_0 = 2$, then the theory of Zariski implies that $I$ has a 
unique Rees valuation. Assume that $\dim R_0 \ge 3$ and 
that there is no change of direction in the  sequence given in Equation~\ref{e2}. 
By Example~\ref{4.2}, 
we have that the special $*$-simple complete ideal $P_{R_0R_n}$ has the unique Rees valuation, $\ord_{R_n}$.\\
$(1) \implies (2)$: First, notice that $\Rees_{R_n} \m_n~\subseteq \Rees P_{R_0R_n}$ 
by Proposition~\ref{4.0}, 
and hence $\vert \Rees P_{R_0R_n}\vert \geq~1$.
To conclude the proof, we prove the following :

\begin{claim}\label{4.401}
If there is at least one change of direction in the local quadratic sequence 
given in Equation~\ref{e2}, then
$\vert \Rees P_{R_0R_n}\vert >~1$.
\end{claim} 

\begin{proof}
Assume there is at least one change of direction between $R_0$ and $R_n$. Choose $j$ minimal 
so that there is no change of direction from $R_{j+1}$ to $R_n$. Then by 
choosing  appropriate  regular parameters $x$ in $R_j$ and $y$ in $R_{j+1}$, we have 
the following local quadratic sequence:
$$
R=:R_0 \subset  {R_1} \subset  {R_2} 
\subset \cdots \underset{\overset \shortparallel {A_{0}}} {R_{j}} 
\subset \underset{\overset \shortparallel {^{x}A_{1}}} {R_{j+1}} 
\subset \underset{\overset \shortparallel {^{yx}A_{2}}} {R_{j+2}} 
\subset \underset{\overset \shortparallel {^{yyx}A_{3}}} {R_{j+3}} 
\subset \cdots \subset \underset{\overset \shortparallel {^{\overbrace{y\cdots yx}^{\text{$n-j$ times}}}A_{n-j}}} {R_{n}}.
$$
By Remark~\ref{7.2}, we have $\Rees_{R_j} P_{R_jR_n} \subseteq \Rees P_{R_0R_n}$. 
Thus to complete the proof of the Claim, we analyse in Example~\ref{4.300}
the structure of a special $*$-simple complete $\m$-primary ideal of a
$d \ge 3$-dimensional regular local ring 
obtained by a change of direction first dividing by 
$x$ and then successively by $y$.  For notational simplicity, we assume that $d = 3$. The 
pattern is similar in the case where $d > 3$. 
\end{proof}

\begin{example}\label{4.300}
Let $(R, \m, k)$ be a $3$-dimensional regular local ring with  maximal 
ideal $\m=(x,y,z)R$. Let $n \geq 3$ .
Consider a  sequence of  local quadratic transforms 
 $$
R~:= ~R_0 ~\subset ~  R_1 ~:= ~  {^xR_1}~ \subset ~R_2 ~:=~ {^{yx}R_2} ~\subset ~R_3  ~:=~ {^{yyx}R_3}~\subset~\cdots ~
\subset~R_n~:={^{\overbrace{y\cdots yyx}^{\text{$n$ times}}}}{R_n}
$$
defined by
$$
\begin{aligned}
& S_1 := R[\frac{\m}{x}],  & N_1&:= (x, \frac{y}{x}, \frac{z}{x})S_1,
&R_1 :=  (S_1)_{N_1},  &\m_1 := N_1R_1 \\
& S_{2} :=R_1[\frac{\m_1}{y/x}], & N_{2}&:=(\frac{x^2}{y}, \frac{y}{x}, \frac{z- b_2y}{y} )S_2,
&R_2:=  (S_{2})_{N_{2}}, &\m_{2} :=  N_{2}R_{2} \\
& S_{3} := R_2[\frac{\m_2}{y/x}],  & N_{3}&:= (\frac{x^3-a_3y^2}{y^2}, \frac{y}{x}, \frac{xz- b_2xy -b_3y^2}{y^2} )S_3,
&R_3:=  (S_{3})_{N_{3}},  &\m_{3} := N_{3}R_{3}\\
&\cdots \\
& S_{n} := R_{n-1}[\frac{\m_{n-1}}{y/x}],
& N_{n} &:= (f,~ g,~h)S_n, 
&R_n:=  (S_{n})_{N_{n}}, &\m_{n} :=  N_{n}R_{n},
\end{aligned}
$$
where 
$$
\aligned 
f~&:=~\frac{x^n-a_3x^{n-3}y^2-\cdots-a_{n-1}xy^{n-2}-a_ny^{n-1}}{y^{n-1}}\\
g~&:= ~\frac{y}{x}\\ 
h~&:=~\frac{x^{n-2}z - b_2x^{n-2}y -\cdots-b_{n-1}xy^{n-2}-b_ny^{n-1}}{y^{n-1}} .
\endaligned
$$
Here the elements $a_i$ and $b_j$ are in $R_0$, and 
we are assuming that $R_0/\m_0 = R_n/\m_n$.  Thus we may choose $x,y,z$ so that 
$N_1 = (x, \frac{y}{x}, \frac{z}{x})S_1$.  We are also assuming that there is a 
change of direction from $R_0$ to $R_2$. Thus we may assume 
$N_{2}=(\frac{x^2}{y}, \frac{y}{x}, \frac{z- b_2y}{y} )S_2$. 

Let 
$$
\aligned
f_0~&:=x^n-a_3x^{n-3}y^2-\cdots-a_{n-1}xy^{n-2}-a_ny^{n-1}\\
 h_0~&:=x^{n-2}z - b_2x^{n-2}y -\cdots-b_{n-1}xy^{n-2}-b_ny^{n-1}.
\endaligned
$$
Then: 
\begin{enumerate}
\item
 Let $v_n:=\ord_{R_n}$. We have 
\begin{equation}\label{e200}
\aligned
v_n(f_0)&=1+(n-1)v_n(y),\\
v_n(y)&=1+v_n(x),\\
v_n(h_0)&=1+(n-1)v_n(y).
\endaligned
\end{equation}
\item
Let 
$$
K ~:= ~ \{ \alpha \in \m_0 ~|~ v_n(\alpha) ~\ge ~v_n(y^n)  ~~ \text{and} ~~ v_0(\alpha) ~ \ge ~ n \}.
$$
Then 
we have  $K=P_{R_0R_n}$
\item 
$\mathcal{BP}(P_{R_0R_n})=\{R_0, ~R_1,~R_2,~R_3~\ldots,~R_n\}$.
\item 
$\mathcal{B}(P_{R_0R_n})=\{n,~1,~1,~\ldots,~1,~1\}$.
\item 
The Rees valuations of $P_{R_0R_n}$ are $\ord_{R_0}$ and $\ord_{R_n}$.
\end{enumerate}
\end{example}

\begin{proof}
 For item (1), since $\m_n = (f, ~g,~h)R_n$ 
we have
 $$
\aligned
1~&=~v_n(~\frac{x^n-a_3x^{n-3}y^2-\cdots-a_{n-1}xy^{n-2}-a_ny^{n-1}}{y^{n-1}})~=~v_n(\frac{y}{x})\\
  &=~v_n(\frac{x^{n-2}z - b_2x^{n-2}y -\cdots-b_{n-1}xy^{n-2}-b_ny^{n-1}}{y^{n-1}} ),
\endaligned
$$
and hence 
$$
v_n(f_0)=1+(n-1)v_n(y),\quad v_n(y)=1+v_n(x),\quad v_n(h_0)=1+(n-1)v_n(y).
$$
Multiplying  the listed generators of $\m_n$ by $xy^{n-1}$,  we obtain elememts in $R_0$
$$
 x\cdot f_0,\quad y^n,\quad x\cdot h_0
$$
By Equation~\ref{e200}, we have
$$
n(1+v_n(x))=v_n(x f_0)=v_n(y^n)=v_n( x h_0).
$$
For item (2), we clearly have  $K \supseteq (xf_0,~ y^n,~xh_0)R_0$.  \\ 
We observe that $v_n(z) \geq v_n(y)$;  for if $v_n(z) < v_n(y)$, 
then 
$$
v_n(x^{n-1}z) ~ < ~  v_n(-b_2x^{n-1}y - \cdots - b_nxy^{n-1})
$$
implies that $v_n(x^{n-1}z) = v_n(xh_0) = v_n(y^n)$, a contradiction. Therefore $z^n \in K$.

Since $K$ has order $n$ and contains $y^n$ and $z^n$, we see that the transform of $K$
in $R_0[\frac{\m_0}{y}]$ and in $R_0[\frac{\m_0}{z}]$ is the whole ring. 
Let $y_1:=\frac{y}{x}$ and $z_1:=\frac{z}{x}$, then $y=xy_1$ and $z=xz_1$. Hence
$$
\aligned
& KR_0[\frac{\m_0}{x}] = KS_1 \\
&\supseteq x^n\big(~x-(a_3y_1^2-\cdots-a_ny_1^{n-1}),~y_1^n,~z_1-(b_2y_1-b_3y_1^2-\cdots-b_ny_1^{n-1}~\big)S_1.
\endaligned
$$
Thus the transform of $K$ in $S_1$ is
$$
K^{S_1} \supseteq \big(~x-(a_3y_1^2-\cdots-a_ny_1^{n-1}),~y_1^n,~z_1-(b_2y_1-b_3y_1^2-\cdots-b_ny_1^{n-1})~\big)S_1.
$$
Since  the ideal $K^{S_1}$ is primary for the maximal ideal $N_1 = (x_1,y_1,z_1)S_1$
and $R_1 = (S_1)_{N_1}$, we have 
$$
K^{S_1}=\big(~x-(a_3y_1^2-\cdots-a_ny_1^{n-1}),~y_1^n,~z_1-(b_2y_1-b_3y_1^2-\cdots-b_ny_1^{n-1})\big)S_1, 
$$
and hence 
$$
K^{R_1}=\big(~x-(a_3y_1^2-\cdots-a_ny_1^{n-1}),~y_1^n,~z_1-(b_2y_1-b_3y_1^2-\cdots-b_ny_1^{n-1})\big)R_1. 
$$
As in Example~\ref{4.2}, we have 
$$
P_{R_1R_n}= \big(~x-(a_3y_1^2 - \cdots -a_ny_1^{n-1}),~y_1^n,~z_1-(b_2y_1-b_3y_1^2-\cdots-b_ny_1^{n-1})\big)R_1, 
$$
and thus $K^{R_1} = P_{R_1R_n}$. 
By \cite[Proposition~2.1]{L}, we have $K=P_{R_0R_n}$.
Items (3) and  (4)  are clear. Since $K$ has order $n$ and contains $y^n, z^n$ and $xh_0$, we
see that $\ord_{R_0}$ is a Rees valuation of $K$. Therefore $\ord_{R_0}$ and $\ord_{R_n}$ are
the Rees valuations of $K$. 
\end{proof}

This completes the proof of Theorem~\ref{4.40}.
\end{proof}

We illustrate in Examples~\ref{4.121} and \ref{4.122} the behavior of 
 a special $*$-simple complete ideal $P_{R_0R_1}$ in cases where $[R_1/\m_1 : R_0/\m_0]>1$.
In Example~\ref{4.121} the ideal $P_{R_0R_1}$ has two Rees valuations,  while in 
Example~\ref{4.122} the ideal $P_{R_0R_1}$ has only one Rees valuation. 
We use notation as in Remark~\ref{4.1} with $d = 3$ and $\m_0 = (x,y,z)R_0$.

\begin{example}\label{4.121}
Let $R_0/\m_0=\mathbb Q$ and  $R_1:=(S_1)_{N_1}$, where 
$$
N_1:=(x, \frac{y}{x}, \big(\frac{z}{x}\big)^2-3)S_1.
$$
Let $w:=\ord_{R_1}$. Then we have
\begin{enumerate}
\item
$$
w(x)=w(\frac{y}{x})=w(\big(\frac{z}{x}\big)^{2}-3)=1,
$$
and the images of $\frac{y}{x^2},  ~ \frac{z^2 - 3x^2}{x^3}$ in the 
residue field $k_w$ of $w$ are 
algebraically independent over $R_0/\m_0$. Also 
$w(z^2-3x^2)=1+w(x^2)=3$. Therefore
\[
\begin{array}{c|c|c|c | c}
        & x & y & z    & z^2 - 3x^2      \\ \hline
w:=\ord_{R_1}      & 1   & 2    & 1  & 3                           
\end{array}
\]
\item  Let 
$$
I:=\{\alpha \in \m_0 ~\vert~ w(\alpha) \geq 3 \}.
$$
Then we have
\begin{enumerate}
\item $I= (x^3, xy, z^2-3x^2, ~y^2, yz, z^3)R_0$.  A 
direct computation shows that $I  =P_{R_0R_1}$. We have 
\[
\begin{array}{c|c|c|c|c|c|c}
     P_{R_0R_1}   & x^3 & xy   &  z^2-3x^2  & y^2 & yz & z^3         \\ \hline
w:=\ord_{R_1}      & 3       & 3     & 3            &4 & 3                 &3               \\ \hline           
  v:=\ord_{R_0}      &3    &2     &2      &2             & 2    & 3                  
\end{array}
\]
\item $\mathcal{BP}(P_{R_0R_1})=\{R_0, R_1 \}$.
\item $\mathcal{B}(P_{R_0R_1})=\{2, 1\}$.
\item The set of Rees valuations of $P_{R_0R_1}$ is $\{\ord_{R_0}, \ord_{R_1}\}$.
\end{enumerate}
\end{enumerate}
\end{example}

\begin{example}\label{4.122}
Let $R_0/\m_0=\mathbb Q$ and  $R_1:=(S_1)_{N_1}$, where 
$$
N_1:=(x, ~ \big(\frac{y}{x}\big)^2-2, ~ \big(\frac{z}{x}\big)^2-3)S_1.
$$
Let $w:=\ord_{R_1}$. Then we have
\begin{enumerate}
\item
$$
w(x)=w(\big(\frac{y}{x}\big)^2-2)=w(\big(\frac{z}{x}\big)^{2}-3)=1,
$$
and  the images of $\frac{y^2 - 2x^2}{x^3}, \frac{z^2 - 3x^2}{x^3}$ in 
the residue field $k_w$ of $w$ are algebraically independent over $R_0/\m_0$.  Also 
$w(y^2-2x^2)=w(z^2-3x^2)=1+w(x^2)=3$. Therefore
\[
\begin{array}{c|c|c|c}
        & x & y & z          \\ \hline
w:=\ord_{R_1}      & 1   & 1    & 1                            
\end{array}
\]
\item  Let 
$$
I:=\{\alpha \in \m_0 ~\vert~ w(\alpha) \geq 3\}.
$$
Then we have
\begin{enumerate}
\item $I=(~y^2-2x^2,~z^2-3x^2,~ \m_0^3~)R_0$.  A direct computation
shows that $I  =P_{R_0R_1}$.  We have 
\[
\begin{array}{c|c|c|c|c|c|c}
     P_{R_0R_1}   & ~y^2-2x^2   &  z^2-3x^2  & \m_0^3         \\ \hline
w:=\ord_{R_1}      & 3       & 3     & 3                      \\ \hline           
  v:=\ord_{R_0}      &2    &2     & 3                      
\end{array}
\]
\item $\mathcal{BP}(P_{R_0R_1})=\{R_0, R_1 \}$.
\item $\mathcal{B}(P_{R_0R_1})=\{2, 1\}$.
\item $\Rees (P_{R_0R_1})= \Rees_{R_1}\m_1 $.
\end{enumerate}
\end{enumerate}
\end{example}

 Example~\ref{4.3} illustrates a pattern with exactly one  change of direction from $R_0$ to $R_2$
where $R_0/\m_0 = R_2/\m_2$.  

\begin{example}\label{4.3}
Let $(R, \m, k)$ be a $3$-dimensional regular local ring with  maximal 
ideal $\m=(x,y,z)R$.
Consider the following sequence of  local quadratic transforms 
 $$
R~:= ~R_0 ~\subset ~  R_1 ~:= ~  {^xR_1}~ \subset ~R_2 ~:=~ {^{yx}R_2} 
$$
defined by
$$
\begin{aligned}
& S_1 ~ :=  ~R[\frac{\m}{x}], \quad  & N_1 ~&:= ~ (x, \frac{y}{x}, 
\frac{z}{x})S_1,  \quad
&R_1 ~:= ~ (S_1)_{N_1},  \quad  &\m_1 ~:= ~ N_1R_1 \\
& S_{2} ~:= ~ R_1[\frac{\m_1}{y/x}], \quad  & N_{2} ~&:= ~ 
 (\frac{x^2}{y}, \frac{y}{x}, \frac{z - b_2y}{y} )S_2,  \quad 
&R_2~:= ~ (S_{2})_{N_{2}}, \quad &\m_{2} ~:= ~ N_{2}R_{2} 
\end{aligned}
$$
Then: 
\begin{enumerate}
\item 
Let $v_2:=\ord_{R_2}$. Then $v_2(x)=2$, $v_2(y)=3$, $v_2(z - b_2y)=  4$,
and the images of 
$\frac{x^3}{y^2}, \frac{x(z - b_2y)}{y^2} $ 
in the residue field $k_{v_2}$ of $V_2$ are algebraically independent over $R_2/\m_2 = k$.
\item 
The special $*$-simple complete $\m$-primary ideal $P_{R_0R_2}$ is  a $v_2$-ideal. We have
$$
\aligned
P_{R_0R_2} ~ & = ~ \{ \alpha \in \m ~|~ v_2(\alpha) \ge 6 \} \\
 ~&=~ (x^3,~ x(z - b_2y), ~ y^2,~ x^2y, ~ y(z- b_2y),  ~ (z - b_2y)^2)R.
\endaligned
$$
\item
$\mathcal {BP}(P_{R_0R_2})=\{R_0,~  R_1, ~ R_2\}.$
\item 
$\mathcal{B}(P_{R_0R_2}) =\{2,~ 1,~ 1\}.$
\item  
The set of Rees valuations of $P_{R_0R_2}$ is $\{\ord_{R_0},~ \ord_{R_2}\}$.  
\item 
Let $I:=P_{R_0R_2}$. Then :
\begin{enumerate}
\item $\Min (\m R[It])=\{P_0,P_2\}$, where 
$$
\aligned
P_2 &=(\m ,~ x^2yt, ~ y(z-b_2y)t,~ (z-b_2y)^2t)R[It]\quad \text{ and}\\
 P_0 &=(\m , ~x^3t, ~ x^2yt )R[It].
\endaligned
$$
\item
Let $V_0$ and $V_2$ denote the valuation rings corresponding to $v_0:=\ord_{R_0}$ 
and $v_2:=\ord_{R_2}$,
 where $V_0=R[It]_{P_0}\cap \mathcal{Q}(R)$ and $V_2=R[It]_{P_2}\cap \mathcal{Q}(R)$. 
Then
$\frac{R[It]}{P_2}$ is a polynomial ring in $3$-variables over $R/\m$, and 
$\frac{R[It]}{P_0}\cong \frac{(R/\m)[T_1, T_2, T_3, T_4]}{(T_2T_4-T_3^2)})$ is a 
$3$-dimensional normal Cohen-Macaulay domain
with minimal multiplicity at its maximal homogeneous ideal with this multiplicity being $2$. 
\end{enumerate}
\end{enumerate}
\end{example}

\begin{proof}
(1) :  Since $\m_2=(\frac{x^2}{y}, ~\frac{y}{x},~ \frac{z-b_2y}{y})R_2$, we have $v_2(\frac{x^2}{y})=v_2(\frac{y}{x})=v_2(\frac{z-b_2y}{y})=1$, 
and hence  $v_2(x)=2, v_2(y)=3, v_2(z - b_2y)=  4$, and  
and   the images of $\frac{x^3}{y^2}, \frac{x(z-b_2y)}{y^2}$ in the residue field $k_{v_2}$ of $V_2$ are algebraically independent over $R_2/\m_2 = k$.\\
(2), (3), and (4) : The transform in $R_2$ of the ideal $K:=(x^3, y^2, x(z-b_2y))R$ is 
the maximal ideal $\m_2=(\frac{x^2}{y}, \frac{y}{x}, \frac{z-b_2y}{y})R_2$ and $v_2(K) = 6$. 
Let 
$$
I~:= ~\{\alpha \in \m ~\vert~v_2(\alpha) \geq 6\} ~ = ~(x^3, x(z-b_2y), y^2, x^2y, 
y(z - b_2y), (z-b_2y)^2)R.
$$ 
The ideal $I$ is a complete $\m$-primary ideal. We see by direct computation
that:
\begin{enumerate}
\item[(a)] $I^{R_1}=(x, (\frac{y}{x})^2, \frac{z-b_2y}{x})R_1$ is a special $*$-simple 
complete  ideal in $R_1$.
\item[(b)]
$I^{R_2}=\m_2$. 
\item[(c)]   
$\mathcal{BP}(I)=\{R_0, R_1,  R_2\}$. 
\end{enumerate}
Thus by \cite[Proposition~2.1]{L}, 
we have $I=P_{R_0R_2}$.  It is clear that 
$\ord_{R_0}(I)=2, \ord_{R_1}(I^{R_1})=1, \ord_{R_2}(I^{R_2})=1.$
Hence $\mathcal{B}(I)=\{2,~1,~1\}$.\\
(5) :  By item (1), $v_2:=\ord_{R_2}$ is a Rees valuation of  $P_{R_0R_2}$. 
We have the following table :
\[
\begin{array}{c|c|c|c|c|c|c|c|c}
   P_{R_0R_2}       & x^3 & x(z-b_2y)  & y^2   &x^2y   &y(z-b_2y)   &(z-b_2y)^2         \\ \hline
v_2:=\ord_{R_2}      & 6   & 6    & 6    &7     &7     &8               \\ \hline
v_1:=\ord_{R_1}         & 3   & 3    &4     &4     &4     &4              \\ \hline
v_0:=\ord_{R_0}      &3    &2     &2       &3    &2    &2             
\end{array}
\]
Since the images of $\frac{y^2}{yz}, \frac{x(z-b_2y)}{yz}$ in the residue field $k_{v_0}$ of $V_0$ are algebrically independent over $k$, 
  $v_0:=\ord_{R_0}$  is a Rees valuation of  $P_{R_0R_2}$. Using 
Proposition~\ref{4.0}, we conclude that $P_{R_0R_2}$ has the  two Rees valuations, $v_0:=\ord_{R_0} $ and $v_2:=\ord_{R_2}$. \\
(6) :  The statements of item (6) follow from the previous items and the connection between 
the Rees valuations of $I$ and minimal primes of $\m R[It]$ in the Rees algebra $R[It]$, 
cf. \cite{HK}.
\end{proof}

 Example~\ref{4.4} illustrates a pattern where there are exactly two changes of direction from
$R_0$ to $R_3$ and where $R_0/\m_0=R_3/\m_3$.

\begin{example}\label{4.4}
Let $(R, \m, k)$ be a $3$-dimensional regular local ring with  maximal ideal $\m=(x,y,z)R$.
Consider the following sequence of  local quadratic transforms 
 $$
R~:= ~R_0 ~\subset ~  R_1 ~:= ~  {^xR_1}~ \subset ~R_2 ~:=~ {^{yx}R_2}~ \subset ~R_3 ~:=~ {^{zyx}R_3}
$$
defined by
$$
\begin{aligned}
& S_1 ~ :=  ~R[\frac{\m}{x}], \quad  & N_1 ~&:= ~ (x, \frac{y}{x}, \frac{z}{x})S_1,  \quad
&R_1 ~:= ~ (S_1)_{N_1},  \quad  \m_1 ~:= ~ N_1R_1 \\
& S_{2} ~:= ~ R_1[\frac{\m_1}{y/x}], \quad  & N_{2} ~&:= ~ 
 (\frac{x^2}{y}, \frac{y}{x}, \frac{z}{y})S_2,  \quad 
&R_2~:= ~ (S_{2})_{N_{2}}, \quad \m_{2} ~:= ~ N_{2}R_{2} \\ 
& S_{3} ~:= ~ R_2[\frac{\m_2}{z/y}], \quad  & N_{3} ~&:= ~ 
 (\frac{x^2}{z}, \frac{y^2}{xz}, \frac{z}{y})S_2,  \quad 
&R_3~:= ~ (S_{3})_{N_{3}}, \quad \m_{3} ~:= ~ N_{3}R_{3}. 
\end{aligned}
$$
Then: 
\begin{enumerate}
\item 
Let $w_3:=\ord_{R_3}$. Then $w_3(x)=4$, $w_3(y)=6$, $w_3(z)=7$, and the images of 
$\frac{x^2y}{z^2}, \frac{y^3}{xz^2}$ 
in the residue field $k_{w_3}$ of $W_3$ are algebraically independent over $R_3/\m_3 = k$.
\item 
The special $*$-simple complete $\m$-primary ideal $P_{R_0R_3}$ is  a $w_3$-ideal. We have
$$
\aligned
P_{R_0R_3} ~ &= ~ \{ \alpha \in \m ~|~ w_3(\alpha) \ge 18 \}\\
            &=~(x^3y,~ y^3, ~ xz^2,~ y^2z, ~ x^3z,~ x^5, ~yz^2,~x^2y^2, z^3,~x^2yz)R.
\endaligned
$$
\item
$\mathcal {BP}(P_{R_0R_3})=\{R_0,~  R_1, ~ R_2, ~ R_3\}.$
\item 
$\mathcal{B}(P_{R_0R_3}) =\{3,~2, ~ 1,~ 1\}.$
\item  
The set of Rees valuations of $P_{R_0R_3}$ is $\{\ord_{R_0},~\ord_{R_1}, ~ \ord_{R_3}\}$.  
\item 
Let $I:=P_{R_0R_3}$. Then :
\begin{enumerate}
\item $\Min (\m R[It])=\{Q_0, Q_1, Q_3\}$, where 
$$
\aligned
Q_3= &(\m ,~ y^2zt, ~x^3zt,~ x^5t,~ yz^2t,~ x^2y^2t, ~z^3t,~ x^2yzt)R[It] \\  
Q_1= &(\m , ~y^3t, ~ y^2zt,~ yz^2t,~ x^2y^2t,~ z^3t,~x^2yzt)R[It] \\
                     Q_0=&(\m,~ x^3yt,~ x^3zt,~ x^5t,~ x^2y^2t,~ x^2yzt)R[It].
\endaligned
$$
\item
Let $W_i$ denote the valuation rings corresponding to $w_i:=\ord_{R_i}$,
 where $W_i=R[It]_{Q_i}\cap \mathcal{Q}(R)$ for $i=0,1,3$. Then
$\frac{R[It]}{Q_3}$ is a polynomial ring in $3$-variables over $R/\m$, and 
$\frac{R[It]}{Q_1}\cong \frac{(R/\m)[T_1, T_2, T_3, T_4]}{(T_2T_4-T_3^2)}$ is a 
$3$-dimensional normal Cohen-Macaulay domain
with minimal multiplicity at its maximal homogeneous ideal with this multiplicity being $2$.  
$\frac{R[It]}{Q_0}\cong \frac{(R/\m)[T_1, T_2, T_3, T_4, T_5]}{\mathcal J}$ is a 
$3$-dimensional normal Cohen-Macaulay domain
with minimal multiplicity at its maximal homogeneous ideal with this multiplicity being $3$, 
where the ideal $\mathcal J$ is generated by the $2 \times 2$ minors of the matrix
$\left[\begin{matrix}
                        T_1 &T_3& T_4\\
                       T_3&T_4&T_5 
                    \end{matrix} \right]$. 
\end{enumerate}
\end{enumerate}
\end{example}

\begin{proof}
(1) :  Since $\m_3=(\frac{x^2}{z},~ \frac{y^2}{xz},~ \frac{z}{y})R_3$, 
we have $w_3(\frac{x^2}{z})=w_3(\frac{y^2}{xz})=w_3(\frac{z}{y})=1$, 
and hence  $w_3(x)=4, w_3(y)=6, w_3(z)=7$, 
and   the images of   
$\frac{x^2y}{z^2}, \frac{y^3}{xz^2}$ 
in the residue field $k_{w_3}$ of $W_3$ are algebraically independent over $R_3/\m_3 = k$.\\
(2), (3), and (4) : The transform in $R_3$ of the ideal $K:=(x^3y, y^3, xz^2)R$ is 
the maximal ideal $\m_3=(\frac{x^2}{z}, \frac{y^2}{xz}, \frac{z}{y})R_3$ and $w_3(K) = 18$. 
Let 
$$
I~:= ~\{\alpha \in \m ~\vert~w_3(\alpha) \geq 18\} ~ = ~ (x^3y,~ y^3, ~ xz^2,~ y^2z, ~ x^3z,~ x^5, ~yz^2,~x^2y^2, z^3,~x^2yz)R.
$$ 
The ideal $I$ is a $w_3$-ideal. We see by direct computation
that:
\begin{enumerate}
\item[(a)] $I^{R_1}= \big((\frac{y}{x})^3, ~y, ~(\frac{z}{x})^2,~(\frac{y}{x})^2(\frac{z}{x}),~z, ~x^2 \big)R_1 = P_{R_1R_3}$.
\item[(b)]  $I^{R_2} = K^{R_2} = (\frac{y}{x}, \frac{x^2}{y}, (\frac{z}{y})^2)R_2  = P_{R_2R_3}$
\item[(c)]
$I^{R_3}=\m_3$. 
\item[(d)]   
$\mathcal{BP}(I)=\{R_0, R_1,  R_2, R_3\}$. 
\end{enumerate}
Thus by \cite[Proposition~2.1]{L}, 
we have $I=P_{R_0R_3}$.  It is clear that 
$\ord_{R_0}(I)=3, \ord_{R_1}(I^{R_1})=2, \ord_{R_2}(I^{R_2})=1, \ord_{R_3}(I^{R_3})=1.$
Hence $\mathcal{B}(I)=\{3, ~2,~1,~1\}$.\\
(5) :  By item (1), $w_3:=\ord_{R_3}$ is a Rees valuation of  $P_{R_0R_3}$. 
We have the following table :
\[
\begin{array}{c|c|c|c|c|c|c|c|c|c|c|c|c|c}
   P_{R_0R_3}       & x^3y & y^3  &  xz^2  & y^2z  &  x^3z  &  x^5  &  yz^2  & x^2y^2  & z^3  & x^2yz  \\ \hline
w_3:=\ord_{R_3}      & 18   & 18   & 18   &19    &19    &20   &20   &20   &21   &21              \\ \hline
w_2:=\ord_{R_2}       & 9    & 9    &10    &10    &10    &10   &11   &10   &12   &11              \\ \hline
w_1:=\ord_{R_1}      &5     &6     &5     &6     &5     &5    &6    &6    &6    &6                 \\ \hline
w_0:=\ord_{R_0}     &4     &3    &3     &3     &4     &5    &3    &4    &3    &4       
\end{array}
\]
Since the images of $\frac{x^3z}{x^3y}, \frac{x^3z}{x^5}$ in the residue field $k_{w_1}$ of $W_1$ are algebrically independent over $k$, 
  $w_1:=\ord_{R_1}$  is a Rees valuation of  $P_{R_0R_3}$, and also
 since the images of $\frac{xz^2}{z^3}, \frac{y^2z}{y^3}$ in the residue field $k_{w_0}$ of $W_0$ are algebrically independent over $k$, 
  $w_0:=\ord_{R_0}$  is a Rees valuation of  $P_{R_0R_3}$.
Using 
Proposition~\ref{4.0}, we conclude that $P_{R_0R_3}$ has the  three Rees valuations, $\ord_{R_0}, \ord_{R_1}$ , and $\ord_{R_3}$. \\
(6) :  The statements of item (6) follow from the previous items  and the connection between 
the Rees valuations of $I$ and minimal primes of $\m R[It]$ in the Rees algebra $R[It]$, 
cf. \cite{HK}.
\end{proof}


\begin{thebibliography}{GGP0}

\bibitem[A]{A}{S.S. Abhyankar, On the valuations centered in a local domain,
Amer. J. Math. {\bf 78} (1956) 321-348}.


\bibitem[CGL]{CGL}{A. Campillo, G. Gonzalez-Sprinberg, and M. Lejeune-Jalabert,  
Clusters of infinitely near points,
 Math. Ann., {\bf 306} (1996),  169-194.}


\bibitem[CHRR1]{CHRR1}{C. Ciuperca,  W. Heinzer, L. Ratliff Jr., and D. Rush,
Projective equivalent ideals and Rees valuations,
J. Algebra, {\bf 282} (2004),  140-156.}


\bibitem[CHRR2]{CHRR2}{C. Ciuperca,  W. Heinzer, L. Ratliff Jr., and D. Rush,
Projective full ideals in Noetherian rings, a survey, Contemporary Math. 
 {\bf 448} (2007),  33-42.}


\bibitem[C]{C}{S. Cutkosky, Factorization of complete ideals,
 J. Algebra, {\bf 115} (1988),  144-149.}


\bibitem[G1]{G1}{J. Gately, Unique factorization of *-products of one-fibered monomial ideals,
 Comm. Algebra, {\bf 28} (2000),  3137-3153.}


\bibitem[G2]{G2}{J. Gately, *-Simple complete monomial ideals,
 Comm. Algebra, {\bf 33} (2005),  2833-2849.}


\bibitem[Go]{Go}{S. Goto, Integral closedness of complete-intersection ideals,
 J. Algebra, {\bf 108} (1987),  151-160.}

\bibitem[HK]{HK}{W. Heinzer and M.-K. Kim, Integrally closed ideals in regular local rings
of dimension two, J. Pure Appl. Algebra {\bf 216} (2012), 1-11.}


\bibitem[Hu]{Hu}{ C. Huneke, The primary components of and integral closures 
of ideals in $3$-dimensional regular local rings, Math. Ann., {\bf 275} (1986), 617-635.}


\bibitem[HS]{HS}{C. Huneke and J. Sally, Birational Extensions in Dimension Two 
and Integrally Closed Ideals, J. Algebra, {\bf 115} (1988), 481-500.}


 \bibitem[K]{K}{ M.-K. Kim, 
Product of distinct simple integrally closed ideals in 2-dimensional regular local rings,
 Proc. Amer. Math. Soc. {\bf 125} (1997), 315-321.}


\bibitem[L]{L}{J. Lipman, 
On complete ideals in regular local rings,
Algebraic Geometry and Commutative Algebra in Honor of Masayoshi Nagata, (1986), 203-231.}


\bibitem[M]{M}{H. Matsumura, {\em Commutative Ring Theory},
Cambridge Univ. Press, Cambridge, 1986.}


\bibitem[MRS]{MRS}{S. McAdam, L.J. Ratliff, Jr., J.D. Sally, Integrally closed 
projectively equivalent ideals, in: Commutative Algebra, in: Math. Sci. Res.Inst.Publ. 
{\bf 15} (1988), 391-405.}


\bibitem[Nag]{Nag}{M. Nagata,  Note on a paper of Samuel concerning asymptotic properties 
of ideals, Mem. Coll. Sci. Univ. Kyoto Ser. A Math. {\bf 30} (1957), 165-176.}

\bibitem[N]{N}{M. Nagata, {\em Local Rings}, Interscience, New York, 1962. }


\bibitem[Rees]{Rees}{D.  Rees, Valuations associated with ideals (II),
J. London Mat. Soc. {\bf 36} (1956), 221-228.}


\bibitem[Sa1]{Sa1}{J. Sally, Regular overrings of regular local rings, Trans. Amer. Math. Soc.
{\bf 171} (1972), 291-300. Erratum Trans. Amer. Math. Soc. {\bf 213} (1975), 429.  }

\bibitem[Sa2]{Sa2}{J. Sally, Fibers over closed points of birational morphisms of 
nonsingular varieties, Amer. J. Math. {\bf 104} (1980), 545-552.}

\bibitem[Sam]{Sam}{P. Samuel, Some asymptotic properties of powers of ideals, Ann. of Math. 
{\bf 56} (1952), 11-21.}


\bibitem[SH]{SH}{I. Swanson and C. Huneke, {\it Integral Closure of Ideals,
Rings, and Modules},  London Math. Soc. Lecture Note Series 336, Cambridge
Univ.  Press, Cambridge, 2006.}


\bibitem[ZS2]{ZS2}
O. Zariski and P. Samuel,{\em Commutative Algebra, Vol. 2},
D. Van Nostrand, New York, 1960.


\end{thebibliography}
\end{document}